\title{Abstract, keywords and references template}
\author{Author Surname$^{1}$, Someone Else$^{2}$  \\
        \small $^{1}$University A \\
        \small $^{2}$University B \\
}
\theoremstyle{plain}
\newtheorem{thm}{Theorem}
\newtheorem{lem}{Lemma}
\newtheorem{prop}{Proposition}
\newtheorem{df}{Definition}
\theoremstyle{definition}
\newtheorem{rem}{Remark}
\newtheorem{exmp}{Example}
\newcommand{\gr}{\color{green}}
\newcommand{\bb}{\color{blue}}
\newcommand{\dd}{\ensuremath{\displaystyle}}
\newcommand{\infl}{\ensuremath{\inf\limits}}
\newcommand{\supl}{\ensuremath{\sup\limits}}
\newcommand\intl{\ensuremath{\int\limits}}
\newcommand{\suml}{\ensuremath{\sum\limits}}
\newcommand{\bD}{\stackrel{\mathscr{D}}{=}}
\newcommand{\1}{\ensuremath{\mathbf{1}}}
\newcommand{\PPP}{\mathscr{P}}
\newcommand{\FFF}{\mathscr{F}}
\newcommand{\BBB}{\mathscr{B}}
\newcommand{\ZZZ}{\mathscr{Z}}
\newcommand{\UU}{\mathscr{U}}
\newcommand{\XX}{\mathscr{X}}
\newcommand{\ud}{\,\mathrm{d}}
\newcommand{\bd}{\stackrel{\text{\rm def}}{=\!\!\!=}}
\newcommand{\PP}{\ensuremath{\mathbf{P}}}
\newcommand{\EE}{\ensuremath{{\mathbb{E}}}}
\newcommand*{\TR}{\hfill \ensuremath{\triangleright}}
\title{About quasi-renewal processes and quasi-regenerative processes 
}
\author{Galina A. Zverkina
\\ V.~A.~Trapeznikov Institute of Control Sciences
\\of Russian Academy of Sciences,\\
Institute for Information Transmission Problems\\ 
 of Russian Academy of Sciences (A.~A.~Kharkevich Institute),\\ Russia
}
\begin{document}
\maketitle
\begin{abstract}

The study of the behaviour of stochastic processes in queuing theory and related fields is often based on the study of the behaviour of regenerative Markov processes.
Often these regenerative processes are a combination of piecewise linear stochastic processes.
We discuss  the definition of a piecewise linear process which is comfortable to study the complex stochastic models currently being studied.

In many cases, a piecewise linear Markov process has an embedded renewal process, and hence the study of the behaviour of a piecewise linear Markov process can be based on an analysis of the behaviour of this embedded renewal process.

But the behaviour of a complex queuing system or a complex reliability model can be described by a non-regenerative stochastic process, but this non-regenerative process can be in some sense ``close'' to some regenerative process.
In this case, embedded processes that would correspond to embedded renewal processes turn out to be in some sense ``close'' to some ``classical'' renewal processes.

Therefore, in this paper, we introduce the concept of quasi-renewal and quasi-regenerative processes.
And we describe an example of such a stochastic model whose behaviour is described by a quasi-regenerative process, and an example of a stochastic model having embedded quasi-renewal processes.

We also propose a method for obtaining the upper bounds for the convergence rate of the distribution of a regenerative and quasi-regenerative process to a stationary distribution, if this process is ergodic.
\end{abstract}
\begin{keywords} {Renewal theory, Markov renewal processes, Applications of Markov renewal processes, Queueing theory, Coupling method, Piecewise linear Markov processes}
\end{keywords}

\noindent{\bf MSC2010} {60K05, 60K15, 60K20, 60K25}

\section{Introduction}
As is well known, most problems in queuing theory and related fields are based on renewal theory.
Usually, it is the renewal processes that describe the process of input flow of customers in a queuing system, their service process, or a sequence of failures or repairs in reliability theory. 
In queuing networks, also as a rule, renewal processes allow one to describe the periods of stay of the customers at network nodes.

In complex systems consisting of several operating units, the behaviour of each unit can be described by the renewal process (with individual characteristics).
However, the state of these units can influence each other.
Accordingly, the characteristics of the renewal processes corresponding to the behaviour of these units may change.
If such changes are small-scale in some sense, it becomes necessary to study such quasi-renewal processes.

Moreover, a great number of processes in queuing theory are regenerative, i.e. they get into a certain state at random intervals, which are almost surely finite, and after they hit this state, the process is ``restarted''.
Hence, the regenerative process has an embedded renewal process, with renewal periods equal to the length of the period of regeneration (the interval between two consecutive hits in the regeneration state).

But in some situations, the studied process is not regenerative, but it is ``close'' to the regenerative process in some sense.
Below will be studied one type of such a process close to the regenerative process.

\section{Renewal processes, piecewise linear processes and regenerative processes}
\subsection{Renewal processes}
Recall the definition of the renewal process and let's discuss in what sense we will use this concept.
\begin{df}\label{defRen}
The renewal process $N_t$ is a counting process \linebreak $N _t\bd\dd\suml _{i=1} ^\infty \1\left \{ \suml _{s=1} ^i \xi _k\leqslant t\right \} $, where $\left \{\xi _1, \xi _2, ...\right \} $ are i.i.d. positive random variables.
$N _t$ changes its states at the times $t _k=S _k\bd\dd\suml _{j=1} ^s \xi _j$.
The times $t _k$ are renewal times.
\TR
\end{df}

The behaviour of the renewal process is determined by the distribution function (d.f.) $F(t)=\PP\{\xi_k\leqslant t\}$ of its regeneration period $\xi_k$.

In some situations, we are interested in the time from the last renewal of the process $N_t$ and the current time $t$; also we can be interested in the time, and how long to wait for the next renewal from the current time.

\begin{df}
Consider some renewal process $N_t$.
For any time $t\geqslant{0}$ denote $B_t\bd t-S_{N_t}$ -- a backward renewal time (or overshot) of the renewal process $N_t$ at the time $t$.
Also, for any time $t\geqslant{0}$ denote $W_t\bd S_{N_t+1}-t$ -- a forward renewal time (or undershot) of the renewal process $N_t$ at the time $t$.   
\TR
\end{df}

If   at the time $t\geqslant 0$ the value of $B_t$  is known ($B_t=a$), then the d.f. of $W_t$ is also known: $F^W(s)=\PP\{W_t\leqslant s\}=\dd\frac{F(a+s)-F(a)}{1-F(a)}$.

\subsection{Backward renewal time of a renewal process as a Markov process}
In a classical sense, renewal processes are counting processes.
However, in addition to the number of renewals in applications of renewal theory, it is also important to take into account the random duration of renewal periods.

These renewal periods are i.i.d. r.v.'s $\{\xi_i\}$ with d.f. $F(s)\bd \PP\{\xi_i\leqslant s\}$.

If the backward renewal time $B_t\bd t-\suml_{i=1}^{N_t}\xi_i=a$, then the next renewal can be in the interval $(t,t+\Delta]$ ($\Delta \ll 1$) with probability $\PP_{a;(t,t+\Delta]}=\dd\frac{F(a+\Delta)-F(a)}{1-F(a)}=\frac{F'(a)\Delta}{1-F(a)}+o(\Delta)$ assuming absolute continuity of d.f. $F(s)$.

\begin{df}\label{lambda}
For the d.f. $F(t)$ of non-negative r.v. the function $\lambda(s)\bd \dd \frac{F'(s)\Delta}{1-F(s)}$ is called the intensity function for the distribution $F(\cdot)$.\TR   
\end{df}

\begin{rem}
In reliability theory and some papers on the queuing theory, the intensity of renewal processes (concerning the nature of the occurrence of the renewal process in an applied problem) is often called ``hazard rate'', ``failure rate'', input flow intensity, etc.

But in the renewal theory, abstracting from the nature of the origin of the renewal process, it is natural to call this value ``intensity'' (of the end of the renewal period).
\TR
\end{rem}

Emphasize, 
\begin{equation}\label{F}
F(s)=1-\exp\left(-\intl_0^s \lambda(u)\ud u\right),    
\end{equation}
and the distribution of the renewal period can be determined by intensity $\lambda(s)$ as well as by d.f. $F(s)$.

Also, for the mixed (non-singular) distributions we can use a generalized notion of intensity.
\begin{df}\label{INT}
For non-singular d.f. $F(s)$ put 
$
f(s)=
\begin{cases}
F'(s), & \mbox{if} F'(s) \mbox{ exists;} \\
0, & \mbox{in the other case,}
\end{cases}
$ 
\\
and the generalized intensity of d.f. $F(s)$ is
$$\lambda(s)\stackrel{{\rm def}}{=\!\!\!=}  \displaystyle \frac{f(s)}{1-F(s)}-\sum\limits _{i}\delta(s-a_i)\ln\big(F(a_i+0)-F(a_i-0) \big),
$$
where  $\{a_i\}$ --- is the set of all points of discontinuity of a function $F(s)$, and $\delta(\cdot)$ is a standard $\delta$-function.
\TR
\end{df}
\begin{rem}
The formula \ref{F} remains correct (see \cite{KalimulinaZverkinaICSM2020}).
\TR
\end{rem}


Thus, at any time $t$ the next behaviour of the backward renewal time is defined by the intensity: it gives the probability of a transition to the state zero (renewal time) or a subsequent linear increase of the value of the backward renewal process.
So, it is Markov.

\begin{df}
A generalized non-negative function $\lambda(s)$ defined on $[0;\infty)$ such that 
\begin{equation}\label{usl_int}
    \dd\intl_0^\infty \lambda(s)\ud s=+\infty
\end{equation}
is called a (generalized) intensity function. 
\TR
\end{df}
\begin{rem}
It is easy to see that the (generalized) intensity function determines the d.f. $F(s)$ according to formula (\ref{F}).
So, the distribution of a non-negative random variable can be determined using the d.f. $F(s)$, distribution density $f(s)\bd F'(s)$, and intensity function (see \cite{KalimulinaZverkina19}).
\TR
\end{rem}
\begin{rem}
    In what follows, when discussing renewal processes, we will be primarily interested in the behaviour of the backward renewal time $B_t$.
    This is because we are interested in the asymptotic behaviour of renewal processes, and not in the total number $N_t$ of renewals, which tends to infinity.
    \TR
\end{rem}

\subsection{Piecewise linear processes}

If we observe several interacting renewal processes, then we may be interested in when the next renewal of at least one of the processes will occur.
Such situations often arise in the study of complex queuing systems or queuing networks or reliability systems.

Probably the first definition of piecewise linear process was proposed by \cite{Belyaev62}.
There the piecewise linear process is defined as two-dimensional Markov processes $\{\nu(t), u(t)\}$, where $\nu(t)$ is a semi-Markov process, and $u(t)\bd t-\sup\{u:\; \nu(u)\neq  \nu(t)\}$, i.e. $u(t)$ is the time that has elapsed since the last transition of the semi-Markov process $\nu(t)$.
The graph of the process defined in this way is shown (1st variant) in Fig.\ref{fig3}.
This process has a single piecewise linear component, which changes at a unit rate.

Also, the piecewise linear process can be defined as a process $\{\nu(t), v(t)\}$, where $v(t) = \inf\{u>t:\;  \nu(u)\neq  \nu(t)\} - t$, i.e. $v(t)$ is the time remaining until the next transition of $\nu(t)$.

In both these definitions, the processes $\{\nu(t), u(t)\}$ and $\{\nu(t), v(t)\}$ are Markov (see, e.g., \cite{Koroluk75}).

Note that the definition given in \cite{Belyaev62, Koroluk75} describes a process with a single piecewise continuous component. 
It is convenient, for example, to describe service in a queuing network, where $\nu(t)$ indicates the node number, and service occurs at a constant unit rate.

\begin{figure}[h]
\begin{center}
\begin{picture}(320,75)
\put(0,20){\vector(1,0){320}}
\put(0,18){\line(0,1){4}}
\thicklines
\put(0,20){\line(1,1){40}}
\multiput(0,18)(0,5){10}{\line(0,1){3}}
\multiput(40,18)(0,5){10}{\line(0,1){3}}
\put(40,20){\line(1,1){60}}
\multiput(100,18)(0,5){13}{\line(0,1){3}}
\put(100,20){\line(1,1){40}}
\multiput(140,18)(0,5){10}{\line(0,1){3}}
\multiput(170,18)(0,5){7}{\line(0,1){3}}
\put(140,20){\line(1,1){30}}
\put(170,20){\line(1,1){40}}
\multiput(210,18)(0,5){9}{\line(0,1){3}}
\put(210,20){\line(1,1){58}}
\multiput(288,30)(6,0){6}{$\cdot$}
\put(0,5){\scriptsize$t_0$}
\put(40,5){\scriptsize$t_1$}
\put(100,5){\scriptsize$t_2$}
\put(140,5){\scriptsize$t_3$}
\put(170,5){\scriptsize$t_4$}
\put(210,5){\scriptsize$t_5$}
\put(0,70){\scriptsize$\nu(t)= \{i_1\}$}
\put(45,70){\scriptsize$\nu(t)= \{i_2\}$}
\put(101,70){\scriptsize$\nu(t)= \{i_3\}$}
\put(145,50){\scriptsize$\nu(t)= \{i_4\}$}
\put(170,70){\scriptsize$\nu(t)= \{i_5\}$}
\put(215,75){\scriptsize$\nu(t)= \{i_6\}$}
\end{picture}
\end{center}
\caption{Visualisation of the piesewise process following \cite{Belyaev62}.}
\label{fig3}
\end{figure}
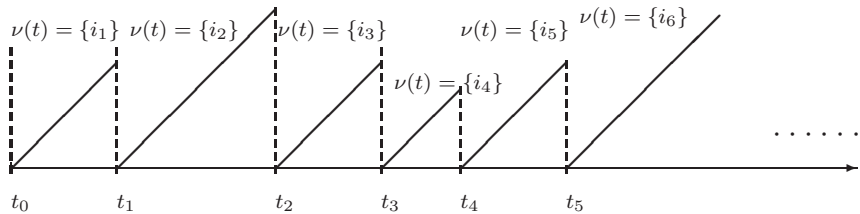

Another slightly more general definition of a process is given in \cite{Kuczura73}.
This definition has been adapted for use in the study of queuing systems of the form $M|G|1$, $GI|M|s$ and $GI + M|M|s$.

Note that the definitions (\cite{Belyaev62, Koroluk75, Kuczura73}) mentioned here describe a process with a single piecewise continuous component.
They are convenient, for example, to describe the behaviour of one-line queuing systems, one-element pressure models, and also queuing networks, where $\nu$ indicates the node number, and service occurs at a constant unit rate.

A much more general definition of a piecewise linear process was given in \cite[\S 3.3.2]{GnedenkoKovalenko89}, let's call this definition ``classical'':
\begin{df}\label{defLin}
Piecewise linear random process $X_t = (\vec\nu(t), \vec\xi(t))$ defined as follows.

1. The space of states $\XX$ of the process $X_t$ is the set of pairs $(\vec\nu,\vec \xi_\nu)$, where $\vec\nu$ is the element of a finite or countable set. 
$\vec\xi_\nu$ is the vector $(\xi_1, \ldots \xi_{|\nu|})$, $|\nu|\geqslant 0$ is the ``rank'' (or the index number of the ``basic state'' of the process $X_t$) of the state $\nu$, and $\xi_j\geqslant O$ (all possible ``base states'' are numbered in some order).

2. Let $X_t = (\vec\nu,\vec y),$ $\vec y = (y_1, \ldots y_{|\nu|})$.
With probability $\lambda_{\nu}(X_t)\times p_{\nu,\nu'} \ud t$ during time $\ud t$ a spontaneous transition $X_t$ to the state $\vec\nu'$ takes place.
After the transition a new value of $ \vec\xi(t)$ is random and possesses a measurable in $\vec y$ distribution function
$$
B_{\nu,\nu'}^{(0)}(\vec x| \vec y) =\PP\{ \vec\xi(t + \ud t) < \vec x | \vec\xi(t) = \vec y, \vec\nu(t + \ud t) = \vec\nu'\}
$$
The probability is $o(h)$ that two or more spontaneous transitions will occur during a short period $h$.

3. In the absence of spontaneous transitions in the interval $(t, t + \ud t)$, we have $\vec\nu(t + \ud t) = \vec\nu(t)$, $\vec\xi(t + \ud t) =\vec\xi(t) - \vec \alpha_\nu \ud t$, where $\vec \alpha_\nu= (\alpha_{\nu,1}, \ldots, \alpha_{\nu,|\nu|})$ is a vector with non-negative components.
    \TR
\end{df}
\begin{rem}
Here and below, it is assumed everywhere that all components of the index vectors are arranged in ascending order.
\TR
\end{rem}

\begin{rem}
From a modern point of view, the Definition \ref{defLin} is not strict.
However, it allows one to describe the behaviour of various queuing systems and queuing networks.
The definition of a piecewise linear process has changed over time.
\TR
\end{rem}
Following the Definition \ref{defLin}, at the time after the change of the component $\nu$ (``rank'') of the process $X_t$, all components of the vector $\vec \xi_\nu$ are positive, and they decrease (with some known speed) until the next change of the ``rank'' $\nu$ -- and the ``rank'' must be changed at the time when at least one of the components of the vector $\xi_\nu$ decreases to the value 0.

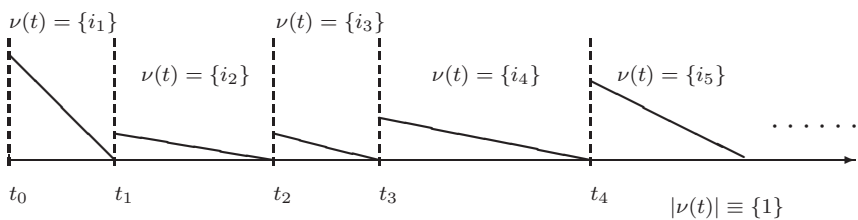
\begin{figure}[h]
\begin{center}
\begin{picture}(320,90)
\put(0,20){\vector(1,0){320}}
\put(0,18){\line(0,1){4}}
\thicklines
\put(0,60){\line(1,-1){40}}
\multiput(0,18)(0,5){10}{\line(0,1){3}}
\multiput(40,18)(0,5){10}{\line(0,1){3}}
\put(40,30){\line(6,-1){60}}
\multiput(100,18)(0,5){10}{\line(0,1){3}}
\put(100,30){\line(4,-1){40}}
\multiput(140,18)(0,5){10}{\line(0,1){3}}
\put(140,36){\line(5,-1){80}}
\multiput(220,18)(0,5){10}{\line(0,1){3}}
\put(220,50){\line(2,-1){58}}
\multiput(288,30)(6,0){6}{$\cdot$}
\put(0,5){\scriptsize$t_0$}
\put(40,5){\scriptsize$t_1$}
\put(100,5){\scriptsize$t_2$}
\put(140,5){\scriptsize$t_3$}
\put(220,5){\scriptsize$t_4$}
\put(250,00){\scriptsize$|\nu(t)|\equiv \{1\}$}
\put(0,70){\scriptsize$\nu(t)= \{i_1\}$}
\put(50,50){\scriptsize$\nu(t)= \{i_2\}$}
\put(101,70){\scriptsize$\nu(t)= \{i_3\}$}
\put(160,50){\scriptsize$\nu(t)= \{i_4\}$}
\put(230,50){\scriptsize$\nu(t)= \{i_5\}$}
\end{picture}
\end{center}
\caption{The graph of an example of a one-dimensional piecewise linear process (meaning the dimension of a piecewise continuous component of the process).}
\label{fig1}
\end{figure} 

An example of a one-dimensional piecewise linear process is shown in Fig. \ref{fig1}. 
This graph can visualize, for example, the movement of a customer through a network.
Here it is assumed that at the moment a customer arrives at the network node, the scope of service work and the rate of service are known.

\begin{figure}[h]
\begin{center}
\begin{picture}(320,120)
\put(0,20){\vector(1,0){320}}
\put(0,18){\line(0,1){4}}
\thicklines
\put(0,40){\line(2,-1){25}}
\put(25,28){\line(4,-1){30}}
\put(55,50){\line(4,-1){15}}
\put(70,46){\line(2,-1){29}}
\put(99,31){\line(4,-1){40}}
\put(190,35){\line(2,-1){30}}
\put(220,50){\line(2,-1){20}}
\put(240,40){\line(4,-1){50}}
\multiput(288,30)(6,0){6}{$\cdot$}
\put(0,80){\vector(1,0){320}}
\put(0,78){\line(0,1){4}}
\thicklines
\put(25,103){\line(2,-1){45}}
\multiput(0,18)(0,5){20}{\line(0,1){3}}
\multiput(25,18)(0,5){20}{\line(0,1){3}}
\multiput(55,18)(0,5){18}{\line(0,1){3}}
\multiput(70,18)(0,5){20}{\line(0,1){3}}
\multiput(100,18)(0,5){20}{\line(0,1){3}}
\multiput(140,18)(0,5){20}{\line(0,1){3}}
\multiput(155,18)(0,5){20}{\line(0,1){3}}
\multiput(190,18)(0,5){20}{\line(0,1){3}}
\multiput(220,18)(0,5){20}{\line(0,1){3}}
\multiput(240,18)(0,5){20}{\line(0,1){3}}
\put(100,115){\line(2,-1){40}}
\put(140,95){\line(1,-1){15}}
\put(155,115){\line(1,-1){35}}
\put(190,110){\line(1,-1){30}}
\put(240,110){\line(2,-1){50}}
\multiput(288,90)(6,0){6}{$\cdot$}
\put(0,0){\scriptsize$t_0$}
\put(25,0){\scriptsize$t_1$}
\put(55,0){\scriptsize$t_2$}
\put(70,0){\scriptsize$t_3$}
\put(100,0){\scriptsize$t_4$}
\put(140,0){\scriptsize$t_5$}
\put(155,0){\scriptsize$t_6$}
\put(190,0){\scriptsize$t_7$}
\put(220,0){\scriptsize$t_8$}
\put(240,0){\scriptsize$t_9$}
\put(-25,115){\scriptsize$\scriptsize\nu(t):$}
\put(5,115){\scriptsize$\{1\}$}
\put(40,115){\scriptsize$\{1,2\}$}
\put(80,115){\scriptsize$\{1\}$}
\put(110,115){\scriptsize$\{1,2\}$}
\put(140,115){\scriptsize$\{2\}$}
\put(170,115){\scriptsize$\{2\}$}
\put(195,115){\scriptsize$\{1,2\}$}
\put(225,115){\scriptsize$\{1\}$}
\put(250,115){\scriptsize$\{1,2\}$}
\end{picture}
\end{center}
\caption{Graph of an example of a variable-dimensional piecewise linear process.}
\label{Fig2x}
\end{figure}
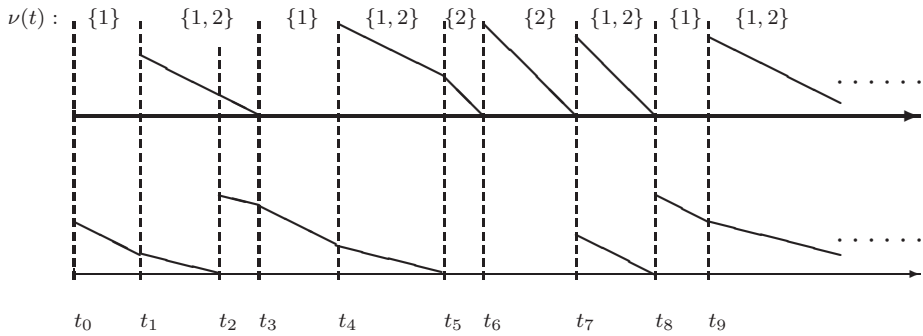

Another example is shown in Fig. \ref{Fig2x}. 
This graph can be interpreted as the behaviour of two elements operating at random times.
They can turn on again after the end of work, or after a while.
Again, the random workload is known at the moment the element is turned on, but the rate of work depends on how many elements are working at a given time.
Here the component $\nu(t)$ can take values $\big\{\{1\},\{2\},\{1,2\}\big\}$. 

In models of queuing theory and related problems, as a rule, the ``rank'' $\nu$ changes only at the moment when at least one of the components of the vector becomes equal to zero.
So, at the time of the change of the ``rank'', we know the time of the next change of the ``rank'', i.e. after a ``rank'' switch, the time of the next ``rank'' switch is known.
This assumption is inconvenient when dealing with renewal processes, namely renewal processes are the main tool in queuing theory and related fields.

But for the asymptotic analysis of the distribution of the stochastic process, it is much more convenient to analyze Markov processes, since there are many methods for studying the behaviour of such processes. 

\begin{df}\label{defMar}
Piecewise linear process random process $X_t = \left(\vec\nu(t); \vec\xi(t)\right)$ defined as follows.
\\
1. The process $X_t$ is 
the set of pairs $\left(\vec\nu(t); \vec\xi(t)\right)=(\nu_1,\nu_2,\ldots,\nu_k;\xi_1,\xi_2,\ldots,\xi_k;)\in\XX$, where process state space is $\XX=\bigcup\limits_{k=1}^\infty\left( \mathbb Z ^k \times   \mathbb R_{\geqslant 0}^k\right)$.
Here $|{\vec\nu}|\bd k> 0$ is the ``rank'' (or the number of the ``basic states'' of the process $X_t$ at the time $t$).
\\
2. 
If we delete components with numbers $i_1,i_2,\ldots,i_k$ $(k<|{\vec\nu}|)$ from vector ${\vec\nu}$, then the resulting vector will be denoted by $\nu_{i_1,i_2,\ldots,i_k}$.

If we add components with numbers $j_1,j_2,\ldots,j_k$ to vector ${\vec\nu}$, then the resulting vector will be denoted by $\nu^{j_1,j_2,\ldots,j_k}$.

Denote the set of all possible vector $\nu_{i_1,i_2,\ldots,i_k}$ by $S^\downarrow({\vec\nu})$, and the set of all possible vector $\nu^{j_1,j_2,\ldots,j_k}$ by $S^\uparrow({\vec\nu})$.
Denote by $S(\vec\nu)$ the set of all non-empty subsets from the set $\{\nu_1,\nu_2,\ldots,\nu_{|{\vec\nu}|}\}$, and $\widehat {S(\vec\nu)}$ -- the set of all non-empty finite subsets from the set $\big\{\mathbb Z\setminus \{\nu_1,\nu_2,\ldots,\nu_{|\nu|}\}\big\} $.

3. 
Let $X_t = (\vec\nu;\vec y)$, $\vec y = (y_{\nu_1},y_{\nu_2}, \ldots y_{\nu_{|\nu|}})$.
For all $M=\{m_1,m_2,\ldots,m_k\}\in S(\vec\nu)$, with probability $\lambda_{m_1,m_2,\ldots,m_k}(X_t) \Delta+o(\Delta)=\lambda_{M}(X_t) \Delta+o(\Delta)$ during time $\Delta>0$ a spontaneous transition $X_t$ to the state $(\vec\nu,\vec z)$ takes place, where 
$$
\begin{array}{ll}
    z_i=y_i+  \Delta, & \mbox{if}i\in S(\vec\nu(t))\setminus M;\\
    z_i=\theta_i  \Delta,\; \theta_i\in(0;1), & \mbox{if}i\in M.
\end{array}
$$
\\
4. 
Let $X_t = (\vec\nu;\vec y)$, $\vec y = (y_1, \ldots y_{|{\vec\nu}|})$.
For all $M=\{m_1,m_2,\ldots,m_k\}\in \widehat{S(\vec\nu)}$, with probability $\mu^{m_1,m_2,\ldots,m_k} (X_t) \Delta+o(\Delta) =\mu^M (X_t) \Delta+o(\Delta)$ during time $\Delta>0$ a spontaneous transition $X_t$ to the state $(\vec\nu^{\,j_1,j_2,\ldots,j_k},\vec z)$ takes place, where  
$$
\begin{array}{ll}
    z_i=y_i+  \Delta, & \mbox{if}i\in S(\vec\nu);\\
    z_i=\theta_i  \Delta,\; \theta_i\in(0;1), & \mbox{if}i\in  M. 
\end{array}
$$
5. 
Let $X_t = (\vec\nu;\vec y)$, $\vec y = (y_1, \ldots y_{|{\vec\nu}|})$.
For all $M=\{m_1,m_2,\ldots,m_k\}\in S(\vec\nu)$, with probability $\lambda^{m_1,m_2,\ldots,m_k} (X_t) \Delta+o(\Delta)=\lambda^M (X_t) \Delta+o(\Delta)$ during time $\Delta>0$ a spontaneous transition $X_t$ to the state $(\vec\nu_{m_1,m_2,\ldots,m_k},\vec z)$ takes place, where 
$$
\begin{array}{ll}
    z_i=z_i+  \Delta, & \mbox{for all}i\in S^\downarrow({\vec\nu}). 
\end{array}
$$

6. 
Let $X_t = (\vec\nu,\vec y)$, $\vec y = (y_1, \ldots y_{|\nu|})$. 
With probability 
$$\Lambda(X_t)\bd1-\left[\suml_{M\in S(\vec\nu(X_t))}\big(\lambda_M (X_t) + \lambda^M (X_t)\big)\Delta + \suml_{M\in \widehat {S(\vec\nu(X_t))}}\mu^M (X_t)\right] +o(\Delta)$$ during time $\Delta>0$ a spontaneous transition $X_t$ to the state $(\vec\nu,\vec z)$ takes place, where 
$$
\begin{array}{ll}
    z_i=y_i+  \Delta, & \mbox{for all}i\in S^\downarrow({\vec\nu}). 
\end{array}
$$
\TR

\end{df}
\begin{rem}
It is easy to see that the Definition \ref{defMar} describes a Markov process that has a variable number of piecewise continuous components.

Transition probabilities are determined by functions of $X_t$, i.e. they are random but dependent on the state of the process $X_t$ at the time $t$.

Here $\lambda_{m_1,m_2,\ldots,m_k}(X_t)$ corresponds to the zeroing of several piecewise components of the process $X_t$, $\mu^{m_1,m_2,\ldots,m_k} (X_t)$ corresponds to adding several piecewise components to the process, and $\lambda^{m_1,m_2,\ldots,m_k}(X_t)$ it corresponds to the disappearance of several piecewise components from the process.

Usually, in queuing problems, the situation is used when $|M|=1$, and piecewise continuous components represent the intervals between arrivals of claims and their service times.

Situation $|M|>1$ may correspond to the batch arrival of requests or their batch service, etc.

The case when  $\Lambda(X_t)=\Lambda(\vec\nu(t))$ corresponds to the model described in Proposition \ref{defLin}. 
If we add condition  $\Lambda(X_t)=\Lambda(\vec\nu(t))$ to condition $\nu(X_t)|\equiv 1$, we get the process definition from \cite{Belyaev62,Koroluk75}.
\TR
\end{rem}

\subsection{Examples of piecewise linear Markov processes (PLMP)}

\begin{exmp}\label{ExRT}
Consider a mathematical model of the restorable element with a warm reserve.
We will describe the behaviour of this model using the process $X_t=((i_1(t),i_2(t));(x_1(t),x_2(t)))$, where $(i_1(t),i_2(t))=\vec\nu(t)$ indicates the state (working or being repaired) of the elements of the system, and the vector $\vec x=(x_1(t),x_2(t))$ denotes the elapsed time of both elements in the state indicated by $\vec\nu(t)$, i.e. for $k=1,2$:
\begin{figure}[h]
\centering
\begin{picture}(400,120)
\put(-10,35){\scriptsize $\hat t_0=0$}
\put(0,30){\vector(1,0){400}}
\put(0,100){\vector(1,0){400}}
\put(0,30){\circle*{3}}
\put(0,100){\circle*{3}}
\put(0,15){ELEMENT II.}
\thicklines
\qbezier(0,30)(50,40)(100,30)
\put(30,40){\scriptsize work [1]}
\put(100,35){\scriptsize $\hat t_1$}
\qbezier(100,30)(120,20)(140,30)
\put(100,15){\scriptsize repair [0]}
\put(140,35){\scriptsize $\hat t_2$}
\qbezier(140,30)(170,40)(200,30)
\put(147,40){\scriptsize work [1]}
\put(200,35){\scriptsize $\hat t_3$}
\qbezier(200,30)(230,20)(260,30)
\put(210,15){\scriptsize repair [0]}
\put(260,35){\scriptsize $\hat t_4$}
\qbezier(260,30)(297,40)(330,30)
\put(282,40){\scriptsize work [1]}
\put(320,35){\scriptsize $\hat t_5$}
\qbezier(330,30)(345,20)(360,30)
\put(345,15){\scriptsize repair [0]}
\put(355,35){\scriptsize $\hat t_6$}
\multiput(365,35)(10,0){5}{\circle*{2}}
\put(-10,105){\scriptsize $t_0=0$}
\put(0,85){ELEMENT I.}
\qbezier(0,100)(80,110)(160,100)
\put(30,110){\scriptsize work [1]}
\put(150,105){\scriptsize $ t_1$}
\qbezier(160,100)(200,90)(240,100)
\put(185,85){\scriptsize repair [0]}
\put(240,105){\scriptsize $ t_2$}
\qbezier(240,100)(280,110)(320,100)
\put(245,110){\scriptsize work [1]}
\put(320,105){\scriptsize $ t_3$}
\qbezier(320,100)(350,90)(380,100)
\put(343,85){\scriptsize repair [0]}
\put(375,105){\scriptsize $ t_4$}
\multiput(385,105)(10,0){3}{\circle*{2}}
\multiput(180, 15)(0,5){20}{\bb \line(0,1){3}}
\put(182,20){\scriptsize $\theta_1$}
\put(182,105){\scriptsize $\theta_1$}
\put(160,100){\gr\line (0,1){15}}
\put(180,110){\gr \vector(-1,0){20}}
\put(162,117){\scriptsize $x_1(\theta_1)$}
\put(140,30){\gr\line (0,-1){15}}
\put(180,15){\gr \vector(-1,0){40}}
\put(145,5){\scriptsize $y_1(\theta_1)$}
\multiput(280, 15)(0,5){19}{\bb \line(0,1){3}}
\put(240,100){\gr\line (0,-1){15}}
\put(282,20){\scriptsize $\theta_2$}
\put(282,92){\scriptsize $\theta_2$}
\put(280,85){\gr \vector(-1,0){40}}
\put(245,90){\scriptsize $x_2(\theta_2)$}
\put(260,30){\gr\line (0,-1){15}}
\put(280,15){\gr \vector(-1,0){20}}
\put(260,5){\scriptsize $y_2(\theta_2)$}
\multiput(340, 15)(0,5){21}{\bb \line(0,1){3}}
\put(342,32){\scriptsize $\theta_3$}
\put(342,102){\scriptsize $\theta_3$}
\put(320,100){\gr\line (0,1){15}}
\put(340,115){\gr \vector(-1,0){20}}
\put(320,120){\scriptsize $x_3(\theta_3)$}
\put(330,30){\gr\line (0,1){15}}
\put(340,45){\gr \vector(-1,0){10}}
\put(320,50){\scriptsize $y_3(\theta_3)$}
\end{picture}
\caption{This scheme is a visualization of the work and repair of a reliability system with a warm reserve. Here at the time $\theta_1$ we have $X_{\theta_1}=((1,0);(x_1(\theta_1),y_1(\theta_1)))$, the element II works; at the time $\theta_2$ we have $X_{\theta_2}=((0,0);(x_2(\theta_2),y_2(\theta_2)))$, both elements are in working state; at the time $\theta_2$ we have  $X_{\theta_3}=(1,0);(x_3(\theta_2),y_3(\theta_2)))$, and the system in failure state.}
\end{figure}
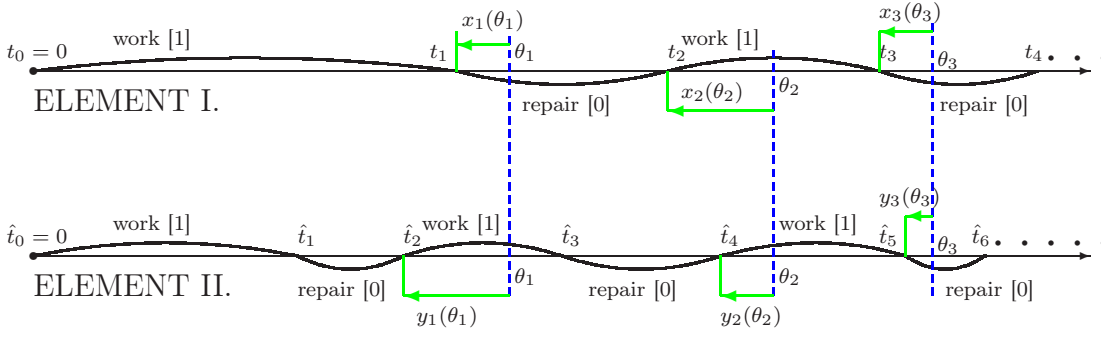\label{FigRT}
$$
\begin{array}{l}
 i_k(t) \bd 
\begin{cases}
0, & \mbox{ if if $i_k$-th element is in working state at the time} t;\\
1, & \mbox{ if it is repaired at the time} t;  
\end{cases}
\\
x_{i_k}(t)=t-\sup\{s:\;i_k(s)\neq i_k(t\}.
\end{array}
$$

The behaviour of both elements is described by the intensities of the failures and the repairs which depend on the full state of this reliability system $X_t$: 
$$
\begin{array}{l}
   \PP\left\{
   \begin{array}{l}
      i_1(t+\Delta)=|i_1(t)-1|,\, x_1(t+\Delta)\in(0;\Delta),\\
    i_2(t+\Delta)=i_2(t),\, x_2(t+\Delta)=x_2(t)+\Delta 
   \end{array}
   \Bigg|X_t\right\}=
   \begin{cases}
       \lambda_1(X_t)\Delta+o(\Delta), & \mbox{  if} i_1(t)=0;\\
       \mu_1(X_t)\Delta+o(\Delta), & \mbox{  if} i_1(t)=1; 
   \end{cases}
   \\\\
   \PP\left\{
   \begin{array}{l}
      i_2(t+\Delta)=|i_2(t)-1|,\, x_2(t+\Delta)=\in(0;\Delta),\\
    i_1(t+\Delta)=i_1(t),\, x_1(t+\Delta)=x_1(t)+\Delta 
   \end{array}
   \Bigg|X_t\right\}=
   \begin{cases}
       \lambda_2(X_t)\Delta+o(\Delta), & \mbox{  if} i_2(t)=0;\\
       \mu_2(X_t)\Delta+o(\Delta), & \mbox{  if} i_2(t)=1; 
   \end{cases}
   \\ \\
   \PP\left\{
   \begin{array}{l}
    i_1(t+\Delta)=i_1(t)-1,\, x_1(t+\Delta)=x_1(t),\\
    i_2(t+\Delta)=i_2(t),\, x_2(t+\Delta)=x_2(t)+\Delta 
   \end{array}
   \Bigg|X_t\right\}=
   \\
  \hspace{5cm} =1-(\lambda_1(X_t)+ \lambda_2(X_t)+ \mu_1(X_t)+\mu_2(X_t))\Delta + o(\Delta)
\end{array}
$$
for $\Delta\ll 1$.

This is a description of a Markov process on the state space $\XX\bd\{0,1\}^2\times \mathbb R_{\geqslant 0}^2$, i.e. the behaviour of the presented reliability model is described by PLMP.
In Fig. \ref{FigRT} a visualization of some realization of such a process is presented.
\TR
\end{exmp}

\begin{exmp}\label{ExQS}
Let the queuing system (QS) have an infinite number of servers and one incoming flow of ``events'' or customers ``of the same type'' such that the incoming flow is determined by a variable intensity, which depends on the full state of the QS.
Here the full state of QS at the time $t$ is: the total number $n(t)$ of customers in the QS, the elapsed times $x_i(t)$ $(i>0)$ of their service in the system, as well as $x_0(t)$ -- the time  elapsed since the last arrival of new customers to the QS.
So, the full state of this QS is the vector $\widehat X_t=(\vec\nu(t);\vec x(t))$. 
Here $\vec\nu(t)=(0, i_1(t), i_2(t),\ldots, i_{n(t)}(t)$, where $n(t)$ is the number of customers in the system at the time $t$, $0$ corresponds to the coordinate of the incoming flow, $ i_1(t),i_2(t), \ldots,i_{n(t)}(t)$ are the numbers of servers serving the corresponding customers.

We assume that all servers are identical, i.e. their numbers don't matter.
We will assume that the server numbers correspond to the order of arrival of customers in the QS.

Thus, the full state of this QS can be described by simplified vector $X_t=(n(t);\vec x(t))$.

\begin{figure}[h]
\centering
\begin{picture}(360,55)
\put(0,10){\vector(1,0){390}}
\put(0,10){\vector(0,1){50}}
\put(5,45){$\tiny n(t)$}
\thicklines
\put(0,0){$\tiny t_0$}
\put(0,10){\line(1,0){40}}
\multiput(40,10)(0,5){2}{\line(0,1){3}}
\put(40,0){$\tiny t_1$}
\put(40,20){\circle*{3}}
\put(40,10){\circle{4}}
\put(40,20){\line(1,0){20}}
\multiput(60,10)(0,5){4}{\line(0,1){3}}
\put(60,0){$\tiny t_2$}
\put(60,30){\line(1,0){40}}
\multiput(100,10)(0,5){4}{\line(0,1){3}}
\put(100,0){$\tiny t_3$}
\put(100,20){\line(1,0){15}}
\multiput(115,10)(0,5){4}{\line(0,1){3}}
\put(115,0){$\tiny t_4$}
\put(115,30){\line(1,0){25}}
\multiput(140,10)(0,5){4}{\line(0,1){3}}
\put(140,0){$\tiny t_5$}
\put(140,20){\line(1,0){10}}
\multiput(150,10)(0,5){2}{\line(0,1){3}}
\put(150,0){$\tiny t_6$}
\put(150,10){\line(1,0){20}}
\multiput(170,10)(0,5){2}{\line(0,1){3}}
\put(170,0){$\tiny t_7$}
\put(170,20){\circle*{3}}
\put(170,10){\circle{5}}
\put(170,20){\line(1,0){30}}
\multiput(200,10)(0,5){4}{\line(0,1){3}}
\put(198,0){$\tiny t_8$}
\put(200,30){\line(1,0){15}}
\multiput(215,10)(0,5){4}{\line(0,1){3}}
\put(213,0){$\tiny t_9$}
\put(215,20){\line(1,0){10}}
\multiput(225,10)(0,5){2}{\line(0,1){3}}
\put(223,0){$\tiny t_{10}$}
\put(225,10){\line(1,0){15}}
\multiput(240,10)(0,5){2}{\line(0,1){3}}
\put(240,0){$\tiny t_{11}$}
\put(240,20){\circle*{3}}
\put(240,10){\circle{5}}
\put(240,20){\line(1,0){25}}
\multiput(265,10)(0,5){4}{\line(0,1){3}}
\put(265,0){$\tiny t_{12}$}
\put(265,30){\line(1,0){15}}
\multiput(280,10)(0,5){6}{\line(0,1){3}}
\put(280,0){$\tiny t_{13}$}
\put(280,40){\line(1,0){20}}
\multiput(300,10)(0,5){8}{\line(0,1){3}}
\put(300,0){$\tiny t_{14}$}
\put(300,50){\line(1,0){30}}
\multiput(330,10)(0,5){8}{\line(0,1){3}}
\put(330,0){$\tiny t_{15}$}
\put(330,40){\line(1,0){10}}
\multiput(340,10)(0,5){6}{\line(0,1){3}}
\multiput(355,30)(10,0){3}{\circle*{2}}
\end{picture}
\caption{An example of a graph for the QS described in the Example \ref{ExQS}.}
\label{FigQS}
\end{figure}
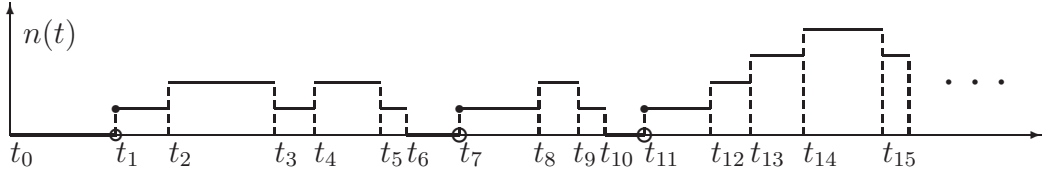

We assume that the rate of service depends on the full state $X_t$ of the QS, as well as the intensity of the incoming flow. 
Then, simply speaking, we assume
$$
\begin{array}{l}
  P(n(t-\Delta)=n(t)+1 \mid X_t) = \lambda(X_t) \Delta + o(\Delta);\\
  P(n(t-\Delta)=n(t)\pm 2 \mid X_t) = o(\Delta);\\
  P(X_{t+\Delta}=(n(t)-1;x_0(t)+\Delta,x_1(t)+\Delta,\ldots,x_{i-1}(t)+\Delta,x_{i+1}(t)+\Delta,\ldots\\
  \hspace{8cm}\ldots,x_{n(t)}+\Delta\mid X_t)  =\mu_i(X_t) \Delta + o(\Delta),
\end{array}
$$
where $t\ll1$.

It is easy to see that the process $X_t$ is Markov on the state space $\bigcup\limits_{k=1}^\infty \left[\mathbb N\times \mathbb R^k_{\geqslant 0}\right]$ as well as the process $\widehat X_t$ on the space $\bigcup\limits_{k=1}^\infty \left[\mathbb N^k\times \mathbb R^k_{\geqslant 0}\right]$.

Both of these processes are PLMP.
Such systems with different conditions for intensities $\lambda(X_t)$ and $\mu_i(X_t)$ have been considered in \cite{Veretennikov13QS,Veretennikov17QS,VeretennikovZverkina14}, et al.
\TR
\end{exmp}

\begin{rem}
On Fig. \ref{FigQS} the times $t_1$, $t_7$, $t_{11}$ are marked when all piecewise-continuous components of the process are set to zero: $X_{t_1}=X_{t_7}=X_{t_{11}}=(1;0,0)$. 
As the process $X_t$ is Markov, these times are the regeneration times.
\TR
\end{rem}

In Fig. \ref{FigRT}, as well as in Fig. \ref{Fig2x}, it can be seen that the presented PLMP does not have regeneration points, while in Fig. \ref{FigQS} we marked regeneration times.
Processes with regeneration times are more convenient for research.
To study their asymptotic behaviour, one can use their embedded renewal processes.
So, the PLMP's are very important in queuing theory and related fields, and in many situations, they are {\it regenerative} processes.

\subsection{Regenerative Process}
\begin{df}
[Regenerative Process (see., e.g., \cite{AfanasevaTkachenko19})]\label{defReg}
 The process $(X_t,\,t\geqslant 0)$ on a probability space $(\Omega,\mathscr{F} , \PP )$, with a measurable state space $(\mathscr{X}  , \mathscr{B} (\mathscr{X} ))$ is regenerative,  if there exists an increasing sequence $\left\{ \theta_n\right\} $  $(n\in\mathbb{Z}_+)$ of Markov moments with respect to the filtration $\mathscr{F} _{t\geqslant  0}$ such that the sequence
$$
\left\{ \Theta_n\right\} =\left\{ X_{t+\theta_{n-1}}-X_{\theta_{n-1}}, \theta_n- \theta_{n-1},t\in[\theta_{n-1},\theta_n)\right\} ,\quad n\in\mathbb{N}
$$
consists of independent identically distributed (i.i.d.) random elements on \linebreak $(\Omega,\mathscr{F} , \PP )$. If $\theta_0\neq 0$, then the process $(X_t,\,t\geqslant 0)$ is called delayed.

The periods $[\theta_{n-1},\theta_n]$ are named {\it regeneration periods}.
\TR
\end{df}

The times $\theta_i$ are called {\it regeneration times}, and they form the renewal process with the renewal times $\xi_i\bd\theta_i-\theta_{i-1}$.
Denote this embedded renewal process by $N_t$, and its backward renewal time -- by $B_t$ (see Definition \ref{defRen}).

Let $F(s)=\PP \left\{ \xi_n\leqslant  s\right\} $ be the distribution function of regeneration period; {\it everywhere in the future we suppose that the distribution $F$ is non-lattice.}

If the process $X_t$ is regenerative, then at any time $t$ on some regeneration period  $[\theta_{n-1},\theta_n]$ ($t\in [\theta_{n-1},\theta_n]$) the distribution of $X_t$ is the function of $t-\theta_{n-1}=B_t$, where $B_t$ is the time elapsed since the start of the current regeneration period.
These times $\theta_i$ form an embedded renewal $N_t$ process with the backward renewal time $B_t$ at the time $t$.

The behaviour of the process $B_t$ is defined by the distribution function (d.f.) $F(\cdot)$ of the renewal period $\xi_i$ of the embedded renewal process $N_t$.
Also, the behaviour of the process $B_t$ is determined by the intensity function $\lambda(t)$ of the d.f. of $\xi_i$ (see Definitions \ref{lambda} and \ref{INT}).
 
Further, we note that the process $B_t$ is ergodic if $\EE\,\xi_i<\infty$, i.e. the distribution $\PPP^B_t$ of the process weak converges to the stationary invariant distribution ($\PPP^B_t\Longrightarrow \PPP^B$), then the process $X_t$ is also ergodic, and it's distribution $\PPP_t$ weak converges to the stationary invariant distribution $\PPP$: $\PPP_t\Longrightarrow \PPP$ -- see, e.g., \cite{Borovkov76}.

Moreover, if $\|\PPP^B_t- \PPP^B\|_{TV}\leqslant \psi(t)$, then for the distribution $\PPP_t^X$ of the process $X_t$:   $\PPP^X_t\Longrightarrow \PPP^X$, and $\|\PPP^X_t- \PPP^X\|_{TV}\leqslant \psi(t)$, where $\|\PPP_t- \PPP\|_{TV}$ is a distance in the total variation metrics: $\|\PPP_t- \PPP\|_{TV}\bd \supl_{A\in \BBB(\XX)}|\PPP_t(A)-\PPP(A)|$.

As well as known, in the case when the renewal time of an embedded renewal process $\xi$ has finite moments $\EE\,\xi^k$, $k>1$, then for all $\ell\leqslant k-1$ there exists some constant $C(\ell)$ such that 
$\|\PPP^B_t- \PPP^B\|_{TV}\leqslant\dd\frac{C(\ell)}{t^\ell}$ (see, e.g., \cite{Borovkov76,Thorisson00}) and, accordingly, $\|\PPP^X_t- \PPP^X\|_{TV}\leqslant\dd\frac{C(\ell)}{t^\ell}$.

But the method of the proof of this result in \cite{Borovkov76, Thorisson00} can not give the bounds for the constant $C(\ell)$.

Above, we will demonstrate the method of the calculation of an upper rough bound for these constants based on the coupling method and generalization of Lorden's inequality.

In \cite{Veretennikov13QS, Veretennikov14QS, Veretennikov16RT, Veretennikov19RT, VeretennikovZverkina14, VeretennikovZverkina16}, the coupling method was used for the analysis of the recurrence properties of some queuing and reliability systems. 
But in this paper, there is no algorithm for the calculation of an upper bound for the constant $C(\ell)$.  

\section{Bounds for convergence rate of the  distribution of $B_t$ in the simplest case}\label{S3}
Here we give a short version of the construction of an upper bound for the convergence rate of the distribution of an embedded renewal process backward time given in the papers \cite{ZverkinaDCCN16} and \cite{ZverkinaDCCN17} -- in the total variation metrics.

Firstly, recall some useful facts.

\subsection{Basic Coupling Inequality}
\begin{prop}[Basic Coupling Inequality]
If two homogeneous Markov processes (process defined on some probability space $(\Omega,\FFF,\PP)$) with the same transition function but with different initial states coincide at the time $\tau$, then after the time $\tau$ their distributions are equal.
Thus,
\begin{multline*}
|\PP\{X_t\in S\}-\PP\{X_t'\in S\}|=
\\
=|\PP\{X_t\in S\,\&\,\tau> t\}-\PP\{X_t'\in S\,\&\,\tau> t\}|+ |\PP\{X_t\in S\,\&\,\tau\leqslant t\}-\PP\{X_t'\in S\,\&\,\tau\leqslant t\}|\leqslant
\\
\leqslant \PP\{\tau> t\}=\PP\{\varphi(\tau)> \varphi(t)\}\leqslant \dd\frac{\EE\,\varphi(\tau)}{\varphi(t)},
\end{multline*}
where $\varphi(t)$ is an increasing positive function.

Thus, 
$$
\left\|\PPP^{X}_t-\PPP^{X'}_t \right\|_{TV}\bd \supl_{S\in \FFF_t}|\PP\{X_t\in S\}-\PP\{X_t'\in S\}|\leqslant \dd\frac{\EE\,\varphi(\tau)}{\varphi(t)},
$$
where $\PPP^{X}_t$ and $\PPP^{X'}_t$ are the distributions of the processes $X_t$ and $X_t'$ accordingly.
\TR
\end{prop}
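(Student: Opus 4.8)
The plan is to realize the two processes on one common probability space as a \emph{coupling} and then to control the total variation distance by the tail probability of the coupling time $\tau$; the final estimate is then just monotonicity of $\varphi$ followed by Markov's inequality. So the argument naturally splits into two essentially independent pieces: a coupling/strong-Markov piece that produces the bound $\left\|\PPP^{X}_t-\PPP^{X'}_t\right\|_{TV}\leqslant\PP\{\tau>t\}$, and an elementary piece that turns the tail $\PP\{\tau>t\}$ into $\EE\,\varphi(\tau)/\varphi(t)$.

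First I would build the coupling. The two processes have the \emph{same} transition function and differ only in their initial states, so I place them on a common $(\Omega,\FFF,\PP)$, let them evolve (under any admissible joint law), and let $\tau$ be the Markov moment at which they first occupy the same state. Because the processes are homogeneous Markov and $\tau$ is a Markov moment, the strong Markov property applied at $\tau$ shows that the post-$\tau$ trajectories $\{X_{\tau+s}\}$ and $\{X'_{\tau+s}\}$, both issued from the common state $X_\tau=X'_\tau$, have identical conditional law given $\FFF_\tau$. Hence I may (and do) glue a single trajectory to both after $\tau$, so that $X_t=X_t'$ almost surely on the event $\{\tau\leqslant t\}$; in particular $\1\{X_t\in S\}=\1\{X_t'\in S\}$ there for every measurable $S$.

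Next I would split the probability according to whether $\tau$ has already occurred by time $t$:
\begin{multline*}
|\PP\{X_t\in S\}-\PP\{X_t'\in S\}| \leqslant
|\PP\{X_t\in S,\,\tau> t\}-\PP\{X_t'\in S,\,\tau> t\}|
\\
+\,|\PP\{X_t\in S,\,\tau\leqslant t\}-\PP\{X_t'\in S,\,\tau\leqslant t\}|.
\end{multline*}
The second term vanishes by the coupling just constructed; the first term is a difference of two probabilities, each lying in the interval $[0,\PP\{\tau>t\}]$, and is therefore bounded in absolute value by $\PP\{\tau>t\}$. Taking the supremum over $S\in\FFF_t$ yields $\left\|\PPP^{X}_t-\PPP^{X'}_t\right\|_{TV}\leqslant\PP\{\tau>t\}$.

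Finally, since $\varphi$ is positive and increasing, the events $\{\tau>t\}$ and $\{\varphi(\tau)>\varphi(t)\}$ coincide, so $\PP\{\tau>t\}=\PP\{\varphi(\tau)>\varphi(t)\}$, and Markov's inequality applied to the non-negative random variable $\varphi(\tau)$ gives $\PP\{\varphi(\tau)>\varphi(t)\}\leqslant\EE\,\varphi(\tau)/\varphi(t)$, which is the asserted bound. The main obstacle is the first step: making rigorous that the two processes can be forced to coincide for \emph{all} times after $\tau$. This rests on $\tau$ being a genuine Markov moment and on the strong Markov property together with time-homogeneity, guaranteeing that the post-$\tau$ evolution has the same law whichever of the two processes it is attached to; once that gluing is justified, everything else is the elementary splitting estimate and Markov's inequality.
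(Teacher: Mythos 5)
Your proposal is correct and follows essentially the same route as the paper, whose justification is embedded in the statement itself: the same decomposition over $\{\tau\leqslant t\}$ versus $\{\tau>t\}$, the vanishing of the post-coupling term, the bound by $\PP\{\tau>t\}$, and then monotonicity of $\varphi$ plus Markov's inequality. The only difference is that you spell out the strong-Markov gluing argument that makes $X_t=X_t'$ on $\{\tau\leqslant t\}$ rigorous, which the paper simply asserts (and your use of $\leqslant$ in the splitting step is in fact cleaner than the paper's claimed equality, which holds only after the coupling is imposed).
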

Further, we will use the function $\varphi(s)\bd s^\ell$, $\ell>0$.

The Basic coupling inequality firstly was used for the Markov chains in discrete time.
But now we are interested in the use of the coupling method for the Markov processes in continuous time, -- namely, for renewal processes with arbitrary d.f. of renewal times.

Therefore we will use the modification of a coupling method adapted for Markov processes in continuous time. 
This modification is ``successful coupling'' invented in \cite{Griffeath75}, see also \cite{Lindvall}.
\subsection{Successful coupling}\label{SC}
\begin{df}[Successful coupling]
Let the pair $\Bigg(\left (X_t^{(1)},X_t^{(2)} \right),t\geqslant0 \Bigg)$ is the pair of Markov processes with identical transition probabilities, but their initial state can be different: $X_0^{(1)}\neq  X_0^{(2)}$. 
Consider the  paired stochastic process $\mathscr{Z}_t= \left(\left (Z_t^{(1)},Z_t^{(2)} \right),t\geqslant0 \right)$ constructed (in a special probability space) the such that:

1. For all $t\geqslant 0$ the random variables $X_t^{(i)}$ and $Z_t^{(i)}$ have the same distribution, $i=1,2$, i.e. $X_t^{(i)}\bD Z_t^{(i)}$.

2. $\mathbf{E}\,\tau  \left(Z_0^{(1)},Z_0^{(2)} \right)<\infty$,  where $$\tau  \left(Z_0^{(1)},Z_0^{(2)} \right)={\tau}  (\mathscr Z_0 )\stackrel{\rm{df}}{=\!\!\!=\!\!\!=} \inf \left\{ t\leqslant   0:\,Z_t^{(1)}=Z_t^{(2)} \right\} .
$$

3.  $Z_t^{(1)}=Z_t^{(2)}$ for all $t\geqslant {\tau}  \left(Z_0^{(1)},Z_0^{(2)} \right)$.

The paired stochastic process $\mathscr{Z}_t=\left(\left (Z_t^{(1)},Z_t^{(2)} \right),t\geqslant0 \right)$ satisfying the conditions 1--3 is called {\it successful coupling}.
\TR
\end{df}
\begin{rem}
The finite-dimensional distributions of the process $\left(Z_t^{(i)},\,t\geqslant0\right)$  may differ from the finite-dimensional distributions of $\left(X_t^{(i)},\,t\geqslant0\right)$; the processes $Z_t^{(1)}$ and $Z_t^{(2)}$ can be dependent.
\TR
\end{rem}
For all $A\in \mathscr{B}(\mathscr{X})$ ($\XX$ is a state space of the processes $X_t^{(i)}$) we can use the coupling inequality in the form for the distributions $\PPP^{X_0^{(i)}}_t$ of the processes $X_t^{(i)}$ with initial states  $X_0^{(i)}$:
\begin{multline*}
\left|\PPP^{X_0^{(1)}}_t(A) - \PPP_t^{X_0^{(2)}}(A) \right|= \left|\PP  \left\{ X_t^{(1)}\in A \right\} - \PP  \left\{ X_t^{(2)}\in A \right\}  \right|=
  \\
= \left|\PP  \left\{ Z_t^{(1)}\in A \right\} - \PP  \left\{ Z_t^{(2)}\in A \right\}  \right| \leqslant
  \PP  \left\{  {\tau}\left  (Z_0^{(1)},Z_0^{(2)}\right )\geqslant t \right\}
  \leqslant   \\
 \leqslant\frac{\mathbf {E} \,\varphi \left({\tau}  \left(Z_0^{(1)},Z_0^{(2)} \right) \right)}{\varphi(t)}\leqslant   \frac{C \left(Z_0^{(1)},Z_0^{(2)} \right)}{\varphi(t)}
\end{multline*}
for some constant $C \left(Z_0^{(1)},Z_0^{(2)} \right)$.

Thus, $ \dd \left\|\PPP^{X_0^{(1)}}_t-\PPP^{X_0^{(2)}}_t \right\|_{TV}\leqslant \dd\frac{C \left(Z_0^{(1)},Z_0^{(2)} \right)}{\varphi(t)}. $

As $Z_0^{(i)}=X_0^{(i)}$, the right-hand side of the inequality  depends only on $X_0^{(i)}$.
Then, if $\PPP_t^{X_0^{(1)}}\Rightarrow \PPP $ for all initial states, we can use the integration of  this inequality  with respect to the measure $\PPP $, i.e.
\begin{equation}\label{int}
\dd \left\|\PPP^{X_0^{(1)}}_t-\PPP \right\|_{TV}\leqslant \dd\intl_{x\in\XX}\frac{C \left(Z_0^{(1)},x \right)}{\varphi(t)} \PPP(\ud x).    
\end{equation}

\subsection{Basic Coupling Lemma}
\begin{lem}[Basic Coupling Lemma (\cite{VeretennikovButkovsky13,Kato14} et al.)]\label{BCL}
Let $f_i(s)$ be the distribution density of r.v. $\theta_i$ ($i=1,2$).
And let $$\intl_{-\infty}^ \infty \min(f_1(s),f_2(s))\ud s=\varkappa>0.$$

Then on some probability space there exists two random variables $\vartheta_i$ such that $\vartheta_i\bD \theta_i$, and $\PP\{\vartheta_1=\vartheta_2\}\geqslant \varkappa$. 
\TR
\end{lem}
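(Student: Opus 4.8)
The plan is to use the standard maximal-coupling (splitting) construction. First I would set $g(s)\bd\min(f_1(s),f_2(s))$, so that the hypothesis reads $\intl_{-\infty}^\infty g(s)\ud s=\varkappa$, and introduce the two nonnegative residual functions $h_i(s)\bd f_i(s)-g(s)$, each of which integrates to $1-\varkappa$ since $\intl_{-\infty}^\infty f_i(s)\ud s=1$. The role of this decomposition is that $g$ captures the ``overlap'' of the two densities: on the event that both variables are drawn from the normalized overlap they can be made literally equal, while the residuals $h_1,h_2$ account for the remaining mass and are sampled independently.

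Next I would build the coupling on an explicit product space. Assuming for the moment $0<\varkappa<1$, I introduce an auxiliary Bernoulli random variable $\beta$ with $\PP\{\beta=1\}=\varkappa$, independent of everything else. On the event $\{\beta=1\}$ I draw a single $\vartheta$ from the probability density $g(s)/\varkappa$ and set $\vartheta_1=\vartheta_2=\vartheta$; on the event $\{\beta=0\}$ I draw $\vartheta_1$ and $\vartheta_2$ independently from the probability densities $h_1(s)/(1-\varkappa)$ and $h_2(s)/(1-\varkappa)$ respectively. Both normalized pieces are genuine densities precisely because $g,h_i\geqslant 0$ and the total masses are $\varkappa$ and $1-\varkappa$.

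Then I would verify the two required properties. For the marginals, conditioning on $\beta$ gives, for each $i$,
$$
\PP\{\vartheta_i\in\ud s\}=\varkappa\cdot\frac{g(s)}{\varkappa}\ud s+(1-\varkappa)\cdot\frac{h_i(s)}{1-\varkappa}\ud s=\big(g(s)+h_i(s)\big)\ud s=f_i(s)\ud s,
$$
so that $\vartheta_i\bD\theta_i$. For the coincidence probability, the event $\{\beta=1\}$ forces $\vartheta_1=\vartheta_2$, whence $\PP\{\vartheta_1=\vartheta_2\}\geqslant\PP\{\beta=1\}=\varkappa$, which is exactly the claimed bound.

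Finally I would dispose of the degenerate endpoint $\varkappa=1$: in that case $f_1=f_2$ almost everywhere, the residuals $h_i$ vanish, and one simply takes $\vartheta_1=\vartheta_2$ drawn from the common density, giving coincidence with probability $1\geqslant\varkappa$. I do not expect any genuine difficulty here; the proof is essentially bookkeeping. The only points requiring a little care are checking that the normalized pieces $g/\varkappa$ and $h_i/(1-\varkappa)$ are legitimate probability densities and separating off the case $\varkappa=1$ so that one never divides by zero.
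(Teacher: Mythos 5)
Your construction is correct and complete: the splitting of each density as $f_i=g+h_i$ with $g=\min(f_1,f_2)$, the Bernoulli mixture with success probability $\varkappa$, the marginal check, and the treatment of the endpoint $\varkappa=1$ are all in order. Note that the paper itself gives no proof of this lemma --- it is quoted from the cited literature --- and your argument is exactly the standard maximal-coupling construction used in those sources, so there is nothing to reconcile.
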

\begin{df}
The value $\dd\intl_{-\infty}^ \infty \min(f_1(s),f_2(s))\ud s=\varkappa$ is called {\it common part} of the distributions of $\theta_i$.
\TR
\end{df}
\begin{rem}
The Lemma \ref{BCL} can be extended for many random variables.
\end{rem}
\begin{lem}[Useful generalization of the Basic Coupling Lemma (\cite{ZverkinaCN19})]\label{GBCL}
Let $f_i(s)$ be the distribution density of r.v. $\theta_i$ ($i=1,\ldots,n$).
And let $$\dd\intl_{-\infty}^ \infty \min\limits_{i=1,\ldots,n}(f_i(s))\ud s=\varkappa>0.$$
Then on some probability space there exists $n$ random variables $\vartheta_i$ ($i=1,\ldots,n$) such that $\vartheta_i\bD \theta_i$ for all $i=1,\ldots,n$, and $\PP\{\vartheta_1=\vartheta_2=\ldots=\vartheta_n\}\geqslant \varkappa$.
\TR
\end{lem}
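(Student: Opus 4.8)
The plan is to generalize directly the splitting construction that underlies the two-variable Basic Coupling Lemma (Lemma \ref{BCL}), building all $n$ variables on a single probability space by means of one auxiliary coin flip. First I would set $g(s)\bd\min_{i=1,\ldots,n}f_i(s)$, so that $g(s)\geqslant 0$ and, by hypothesis, $\intl_{-\infty}^\infty g(s)\ud s=\varkappa$. Since $g(s)\leqslant f_i(s)$ pointwise for every $i$, each residual $h_i(s)\bd f_i(s)-g(s)$ is non-negative and satisfies $\intl_{-\infty}^\infty h_i(s)\ud s=1-\varkappa$. Because $\varkappa=\intl g\leqslant\intl f_i=1$, we have $\varkappa\in(0,1]$; the boundary case $\varkappa=1$ forces $f_1=\ldots=f_n$ almost everywhere, and there one simply draws a single value and copies it into all the $\vartheta_i$, so I would assume $0<\varkappa<1$ from now on.

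The core of the construction is to introduce mutually independent random elements $\gamma, V, U_1,\ldots,U_n$ on a common probability space, where $\gamma$ is Bernoulli with $\PP\{\gamma=1\}=\varkappa$, where $V$ has density $g(s)/\varkappa$, and where each $U_i$ has density $h_i(s)/(1-\varkappa)$. Then I set $\vartheta_i\bd V$ for all $i$ on the event $\{\gamma=1\}$, and $\vartheta_i\bd U_i$ on the event $\{\gamma=0\}$.

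Two verifications remain, both routine. For the marginals, conditioning on $\gamma$ gives the density of $\vartheta_i$ as $\varkappa\cdot\frac{g(s)}{\varkappa}+(1-\varkappa)\cdot\frac{h_i(s)}{1-\varkappa}=g(s)+h_i(s)=f_i(s)$, so $\vartheta_i\bD\theta_i$ as required. For the coincidence probability, on the event $\{\gamma=1\}$ all the $\vartheta_i$ equal the single value $V$, hence $\{\gamma=1\}\subseteq\{\vartheta_1=\ldots=\vartheta_n\}$ and therefore $\PP\{\vartheta_1=\ldots=\vartheta_n\}\geqslant\PP\{\gamma=1\}=\varkappa$.

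I do not anticipate a genuinely hard step; the only points requiring care are bookkeeping ones --- checking that $h_i\geqslant0$, which is exactly where the \emph{pointwise} minimum (rather than some weaker lower envelope) is used, and that the normalizing constants $\varkappa$ and $1-\varkappa$ are the correct total masses of $g$ and $h_i$ --- together with the degenerate case $\varkappa=1$, which must be excluded before dividing by $1-\varkappa$. The essential structural observation, and the reason the extension from $n=2$ to general $n$ costs nothing, is that the common component $g$ is still a single function (the pointwise minimum of all $n$ densities), so one shared draw $V$ simultaneously couples all $n$ variables on the same event $\{\gamma=1\}$.
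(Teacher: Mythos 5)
Your proof is correct. The paper itself gives no proof of this lemma (it only cites \cite{ZverkinaCN19}), but your splitting construction --- common component $g(s)=\min_{i}f_i(s)$, a single Bernoulli coin with success probability $\varkappa$, one shared draw $V$ with density $g/\varkappa$ on success, and independent residual draws $U_i$ with densities $(f_i-g)/(1-\varkappa)$ on failure, with the degenerate case $\varkappa=1$ handled separately --- is exactly the standard maximal-coupling argument behind Lemma \ref{BCL} and its cited $n$-variable extension, so your approach coincides with the intended one.
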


\subsection{Lorden's inequality}

\begin{thm}[Lorden's inequality \cite{Lorden70}]\label{LI}
Lorden's inequality states that the expectation of the backward renewal time is bounded as
\begin{equation}
\EE\, B _t
 \leqslant  \frac{\EE\, \xi ^2}{\EE\, \xi}\bd \Xi\big(=\mbox{functional of}F(s)\big).
\end{equation}
\TR
\end{thm}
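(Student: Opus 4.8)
The plan is to reduce Lorden's bound to a renewal--theoretic convolution and then to control that convolution by the renewal function. First I would derive a renewal equation for $g(t)\bd\EE\,B_t$ by conditioning on the first renewal epoch $\xi_1$: on $\{\xi_1>t\}$ there is no renewal in $[0,t]$ so $B_t=t$, while on $\{\xi_1=s\leqslant t\}$ the process starts afresh and $B_t$ has the law of $B_{t-s}$. This gives
\begin{equation*}
g(t)=t\,(1-F(t))+\intl_0^t g(t-s)\,\ud F(s).
\end{equation*}
Setting $h(x)\bd x\,(1-F(x))\geqslant0$ (so $h(0)=0$) and letting $U\bd\suml_{n\geqslant0}F^{*n}$ be the renewal measure, the solution is the convolution $g(t)=\intl_{[0,t]}h(t-u)\,U(\ud u)$. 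A one--line Fubini computation gives $\intl_0^\infty h(x)\,\ud x=\tfrac12\,\EE\,\xi^2$, which is where the numerator of $\Xi$ will come from.

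Second, I would insert the two--sided elementary renewal bound $\dfrac{s}{\EE\,\xi}\leqslant U(s)\leqslant\dfrac{s}{\EE\,\xi}+\dfrac{\EE\,\xi^2}{(\EE\,\xi)^2}$ (the lower bound is immediate from $S_{N_s+1}>s$ and Wald's identity $\EE\,S_{N_s+1}=\EE\,\xi\cdot U(s)$). Integrating by parts -- legitimate because $h(0)=0$ -- rewrites the Stieltjes convolution as $g(t)=\intl_{[0,t]}U(t-y)\,\ud h(y)$, and the decomposition $U(s)=\frac{s}{\EE\,\xi}+R(s)$, with $0\leqslant R(s)\leqslant\frac{\EE\,\xi^2}{(\EE\,\xi)^2}$, splits $g$ into a ``stationary'' part and a remainder. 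The stationary part is clean:
\begin{equation*}
\frac{1}{\EE\,\xi}\intl_{[0,t]}(t-y)\,\ud h(y)=\frac{1}{\EE\,\xi}\intl_0^t h(y)\,\ud y\leqslant\frac{\EE\,\xi^2}{2\,\EE\,\xi}=\frac{\Xi}{2}.
\end{equation*}

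The hard part will be the remainder $\intl_{[0,t]}R(t-y)\,\ud h(y)$. The signed measure $\ud h$ is positive while $x\,(1-F(x))$ increases and negative afterwards, and $h$ attains a maximum $\leqslant\EE\,\xi$; hence the crude estimate that replaces $R$ by its uniform bound $\frac{\EE\,\xi^2}{(\EE\,\xi)^2}$ and discards the negative part only yields remainder $\leqslant\Xi$, giving the lossy total $\tfrac32\Xi$ instead of $\Xi$. Recovering the sharp constant therefore requires exploiting the cancellation between the increasing and decreasing regimes of $h$ together with the regularity of $R$ -- this is exactly the step where Lorden's original analysis does its real work. As a cross--check and an alternative entry point, the forward recurrence time obeys the exact identity $\EE\,W_t=\EE\,\xi\cdot U(t)-t$ (Wald applied to the stopping time $\nu=\inf\{n:\,S_n>t\}$, which is unavailable for $B_t$ since $N_t$ is not a stopping time), so the whole problem is equivalent to the sharp renewal--function bound $U(t)\leqslant\frac{t}{\EE\,\xi}+\frac{\EE\,\xi^2}{(\EE\,\xi)^2}$, and I would try to prove that bound directly via the second--moment Wald identity for $S_\nu$.
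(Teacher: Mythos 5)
Your proposal cannot be compared with an internal argument, because the paper never proves Theorem~\ref{LI}: it imports it verbatim from \cite{Lorden70}, uses it only through Markov's inequality in Section~\ref{constr}, and later replaces it by the generalized version (Theorem~\ref{GLEthm}), again by citation. So the attempt has to stand as a self-contained proof, and it does not. The parts that are right: conditioning on $\xi_1$ does give $g(t)=t\,(1-F(t))+\int_0^t g(t-s)\,\mathrm{d}F(s)$, hence $g=h*U$ with $h(x)=x(1-F(x))$; $\int_0^\infty h(x)\,\mathrm{d}x=\frac{1}{2}\EE\,\xi^2$; the stationary part is $\leqslant\Xi/2$; and the crude remainder estimate is legitimate, since the positive part of $\mathrm{d}h=(1-F(x))\,\mathrm{d}x-x\,\mathrm{d}F(x)$ has total mass at most $\int_0^\infty(1-F(x))\,\mathrm{d}x=\EE\,\xi$, giving remainder $\leqslant\Xi$ and total $\frac{3}{2}\Xi$.

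The gap is twofold, and it is fatal to the constant claimed in the theorem. First, the inserted upper bound $U(s)\leqslant s/\EE\,\xi+\EE\,\xi^2/(\EE\,\xi)^2$ is not an elementary ingredient: by your own identity $\EE\,W_t=\EE\,\xi\cdot U(t)-t$, it \emph{is} Lorden's inequality (for the forward time), so invoking it is circular, and ``I would try to prove that bound via the second-moment Wald identity'' defers exactly the hard content of \cite{Lorden70} --- Wald's first and second moment identities alone do not produce it. Second, even granting that bound, your route still only yields $\frac{3}{2}\Xi$ for $\EE\,B_t$: the cancellation between the increasing and decreasing regimes of $h$, which you correctly identify as ``where Lorden's original analysis does its real work,'' is named but never performed; relatedly, the ``equivalence'' you assert at the end is between the $U$-bound and the \emph{forward}-time bound, whereas the theorem as stated in the paper concerns the \emph{backward} time $B_t$, for which that transfer is precisely the unfinished convolution step. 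Two honest ways to close this: cite \cite{Lorden70} for the renewal-function bound (as the paper effectively does) and then actually carry out the cancellation argument for $B_t$; or present your self-contained computation as proving the weaker bound $\EE\,B_t\leqslant\frac{3}{2}\Xi$ --- which, it is worth noting, would suffice for everything the paper does with Theorem~\ref{LI} in Section~\ref{constr}, since only finiteness of the bound and Markov's inequality are used there, at the cost of changing the constants $p_\Theta$ and $\varkappa_\Theta$.
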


\subsection{Idea of construction of successful coupling for two backward renewal times processes}\label{constr}

Consider two renewal processes $N_t^{(1)}$  and $N_t^{(2)}$  which start at different times before the time $t=0$.
So, at the time $t=0$, their backward renewal times can be different.
Consider the Markov processes $B_t^{(1)}$ and $B_t^{(2)}$ -- the backward renewal times of considered renewal process; the values $b_1\bd B_0^{(1)}$ and \linebreak $b_2\bd B_0^{(2)}$ can be different, but the transition probabilities of both these processes are defined by the (generalized) intensity $\lambda(s)$. 
Here we suppose that $\lambda(s)$ is a.s. positive, and, accordingly, the d.f. $f(s)=F'(s)$ of the renewal times is also a.s. positive. 

{\it Our goal is to create on some specific probability space a pair of processes $\left (Z_t^{(1)},Z_t^{(2)} \right)$ in such a way that at each time $t\geqslant 0$ the distributions of the original and created pair of processes coincide $\left[\left (Z_t^{(1)},Z_t^{(2)} \right)\bD \left (B_t^{(1)},B_t^{(2)} \right)\right]$, and the processes $Z_t^{(1)}$, $Z_t^{(2)}$ will coincide at some time $\tau$, such that $\EE\,\tau<\infty$.}

So, for the pair $\BBB_t\bd\left(\left (B_t^{(1)},B_t^{(2)} \right),t\geqslant0 \right)$ we will construct (in a special probability space) the paired stochastic process $\mathscr{Z}_t= \left(\left (Z_t^{(1)},Z_t^{(2)} \right),t\geqslant0 \right)$ such that it is a successful coupling for $\BBB_t$ in the sense given in Section \ref{SC}.

To do this, we will start with a copy of processes $\left (B_t^{(1)},B_t^{(2)} \right)$, and then, at Markov times, we will change the pair (probability space + process $\ZZZ_t$) in such a way that for each such change, coupling of $Z_t^{(1)}$ and $Z_t^{(2)}$ can occur with some positive probability.

For this aim, let us generate $\UU_k$, $\UU_k'$,  $\UU_k''$,  $\UU_k'''$ ($k\in \mathbb Z_{\ge 0}$), which are i.i.d. r.v.'s  uniformly distributed on $[0,1)$.
These r.v.'s will be used in the next constructions.

At the time $t=0$, the distributions of the forward renewal times of the processes $N_t^{(1)}$  and $N_t^{(2)}$ are defined by the values $b_1$ and $b_2$:
\begin{equation}\label{W}
F_{b_i}(s)\bd \PP\left\{W_0^{(i)}\leqslant  s|B_0^{(i)}=b_i\right\}= \dd\frac{F(s+b_i)-F(b_i)}{1-F(b_i)}.    
\end{equation}
This is the distribution of the forward (residual) renewal time given the value of the backward renewal time.

For construct renewal process $(N^{b_1}_t,\,t\geqslant 0)$ with the initial state of backward renewal time $B_0^{(1)}=b_1$, it is enough calculate it's renewal times $t_0$, $t_1$, $t_2$, \ldots.
 
Put $t_0\bd F_{b_1}^{-1}(\UU_0)$, then $t_1\bd t_0+F^{-1}(\UU_1)$, \ldots, $t_j\bd t_{j-1}+F^{-1}(\UU_j)$, $j>1$, \ldots.

(Recall that $\PP\{F^{-1}\UU_j\leqslant s\}=F(s)$.)

\begin{figure}[h]
    \centering
   \begin{picture}(330,40)
\put(0,20){\vector(1,0){330}}
\thicklines
{\qbezier(0,33)(40,34)(60,20)}
{\qbezier(60,20)(80,40)(100,20)}
\qbezier(100,20)(140,40)(180,20)
\qbezier(180,20)(200,40)(220,20)
\qbezier(220,20)(250,40)(280,20)
\qbezier(280,20)(290,40)(300,20)
\qbezier(300,20)(320,31)(330,30)
\thinlines
\put(15,35){\scriptsize$\xi_0\sim  F_{b_1}$}
\put(67,35){\scriptsize$\xi_1\sim  F$}
\put(125,35){\scriptsize$\xi_2\sim  F$}
\put(185,35){\scriptsize$\xi_2\sim  F$}
\put(235,35){\scriptsize$\xi_4\sim  F$}
\put(210,13){\scriptsize$t_3$}
\put(58,13){\scriptsize$t_0$}
\put(98,13){\scriptsize$t_1$}
\put(178,13){\scriptsize$t_2$}
\put(278,13){\scriptsize$t_4$}
\put(298,13){\scriptsize$t_5$}
\put(0,13){\scriptsize$0$}
\end{picture}
    \caption{Construction of the process $Z_t^{(1)}$ with the same distributions as the process $N_t^{(1)}$.}
    \label{fig:Z1}
\end{figure}
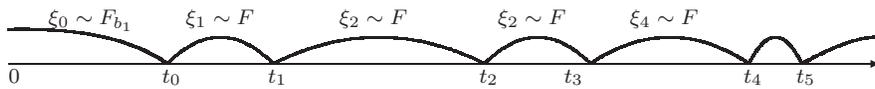

Analogically, for $B_t^{(2)}$ we calculate: $t'_0\bd F_{b_1}^{-1}(\UU'_0)$, $t'_1\bd t'_0+F^{-1}(\UU'_1)$, \ldots, $t'_j\bd t'_{j-1}+F^{-1}(\UU'_j)$, $j>1$, \ldots.

Now, for $\ZZZ_t$ we want to change the construction  of this pair of processes in such a way that at some time $\tau$ their renewals occur simultaneously. 

Our construction of the successful coupling is based on the next consideration.

$N_t^{(1)}$  (corresponding to $B_t^{(1)}$) and $N_t^{(2)}$ (corresponding to $B_t^{(2)}$) are the renewal processes.
Suppose that some renewal time of $N_t^{(1)}$ is $\hat t>\sup\{t:\; N_t^{(2)}<1\}$, that is, before the time $\hat t$ there was at least one renewal of the processes $N_t^{(2)}$.

Suppose that at the time  $\hat t$, the backward renewal time $B_{\hat t}^{(2)}$ of the process $N_t^{(2)}$ is equal to $a\geqslant 0$:  $B_{\hat t}^{(2)}=a$.

At this time, we can apply the Basic Coupling Lemma \ref{BCL} to the next renewal period of the process $N_t^{(1)}$ and the forward renewal time of the process $N_t^{(2)}$, which have the d.f. $\dd F_a(s)=\frac{F(s+a)-F(a)}{1-F(a)}$.

So, with probability $\varkappa(a)\bd\dd \intl_0^\infty \min(f(s), f_a(s))\ud s>0$, where  
$f(s)=F'(s)$, and $ f_a(s)=F'_a(s)$ the next renewals of the processes $N_t^{(1)}$ and $N_t^{(2)}$ coincide.
$\varkappa(a)>0$ because $f(s)>0$ a.s.; moreover, $\infl_{a\in[0;T]}\varkappa(a)>0$ for all $T>0$.

But the distribution of the random value $B_{\hat t}^{(2)}=a$ is unknown.

The Theorem \ref{LI} (Lorden's inequality) makes it possible to indicate the bounds for the value  $B_{\hat t}^{(2)}=a$ with known probability -- after the time $t_0'$ (i.e. before the first renewal of the process $N_t^{(2)}$ we cannot say about the use of Lorden's inequality).

Really, for all $\Theta>\Xi$ (see the Theorem \ref{LI}), the Markov inequality gives: $\PP\left\{B_t^{(2)}\leqslant \Theta\right\}\geqslant p_\Theta\bd 1-\dd \frac{\Xi}{\Theta}$.
Thus, at the all renewal times $\hat t$ of the process $N_t^{(1)}$ (after the first renewal of the process $N_t^{(2)}$), $B_{\hat t}^{(2)}=a\in [0, \Theta]$ with probability greater than $p_\Theta$.

So, in the case $B_{\hat t}^{(2)}=a\in [0, \Theta]$, at the time $\hat t$ we can apply the Basic Coupling Lemma \ref{BCL}, and with probability greater then  $\varkappa_\Theta\bd\infl_{a\in(0,\Theta)}\varkappa(a)$ the next renewal of both processes coincide.
$\varkappa_\Theta>0$ because we suppose that the density $f(s)=F'(s)$ exists and is positive almost sure for $s>0$.

Therefore, after any renewals of the process $N_t^{(1)}$ (after the first renewal of the process $N_t^{(2)}$), we can prolong the construction of the pair \linebreak $\mathscr{Z}_t= \left(\left (Z_t^{(1)},Z_t^{(2)} \right),t\geqslant0 \right)$ by such a way, that the processes $Z_t^{(1)}$ and $Z_t^{(2)}$ coincide at the next renewal.
This time of the coincidence of the processes $Z_t^{(1)}$ and $Z_t^{(2)}$ is a coupling epoch $\tau=\tau\left (Z_0^{(1)},Z_0^{(2)} \right)=\tau\left (B_0^{(1)},B_0^{(2)} \right)=\tau(b_1,b_2)$.

So, the coupling epoch of the pair $\mathscr{Z}_t= \left(\left (Z_t^{(1)},Z_t^{(2)} \right),t\geqslant0 \right)$ is a geometrical sum of conditional renewal times given the coincidence happened at the last renewal period. 

Thus, the coupling epoch $\tau(b_1,b_2)$ for successful coupling is
\begin{multline*}
 \tau(b_1,b_2)=   \left\{ T_0+\suml_{i=1}^n\xi_i^{(2)}\;\Bigg|\; \mbox{coupling epoch is}t_{n+1}'\right\}
 \\
 \mbox{ with probability  greater then}(1-p_0\varkappa_\Theta)^{n-1}\bd \pi^{n-1},
\end{multline*}
where $T_0\bd \max \left\{W_0^{(1)},W_0^{(2)}\right\}\le W_0^{(1)}+W_0^{(2)}$ (see (\ref{W})).
So, it can calculate an upper bounds for $\EE\,(\tau(b_1,b_2))^\ell$ if there exists finite $\EE\,\xi_i^k$, $k>\ell+1$.
  
This is a technical problem that can be solved with the use of Jensens's inequality and inequality   $\EE(\xi|A)\PP(A)\leqslant  \EE(\xi)$.
  
After obtain the bounds $\EE\,(\tau(b_1,b_2))^\ell\leqslant C(\ell,b_1,b_2)$ we have \linebreak $\|\PPP^{b_1}_t-\PPP^{b_2}_t\|_{TV}\leqslant \dd\frac{C(\ell,b_1,b_2)}{t^\ell}$, where $\PPP^{b_i}_t$ is a distribution of the process $B_t^{(i)}$ with initial state $B_0^{(i)}=b_i$ --  see, e.g., \cite{ZverkinaDCCN16,ZverkinaDCCN17}.

The stationary measure of $B_t$ in our conditions is well-known (see, e.g., \cite{Smith58}): $\PPP(0;x)=\dd\frac{1}{\EE\,\xi}\intl_0^x (1-F(s))\ud s$. 
Then no great difficulties of the integration $\dd\intl_0^\infty C(\ell,b_1,b_2)\ud \PPP(b_2)$ by stationary measure.

\begin{rem}
 In the first attempts to analyse the recurrence property of the piecewise linear processes in queuing theory, the special Lyapunov function was used.
 Thus, all distributions of all renewal times defining the queuing process had to be absolutely continuous (\cite{Veretennikov13QS, Veretennikov14QS}).
 
 But the use of Lorden's inequality made it possible to find an upper bound for the rate of convergence of the distribution of the queuing process (\cite{ZverkinaJMS21}).
 
 In all these papers, the input flow has an intensity that is separated from zero by a positive constant.
\end{rem}

\section{Quasi-renewal and quasi-regenerative processes}
Usually, in queuing theory and related fields, the ``classical'' renewal processes are used (\cite{Smith58}) or their combinations as alternating renewal processes (see, e.g., \cite{Sheldon10}).

Earlier it was said that the renewal process is a counting process. 

For the renewal process, all renewal periods have an identical distribution.

But in Example \ref{ExRT}, as a rule, the distribution of work and repair time changes at different time intervals, since the functioning of both elements depends on each other.
Therefore, in such a situation, the sequence of periods of work or repair of both elements is not a renewal process. 
However, for such a reliability model, with some restrictions for intensities, the ergodicity and rate of convergence of the distribution of the state of this system to the limit distribution was studied in \cite{ZverkinaCN19}.

In Example \ref{ExQS}, also under general assumptions about the intensities, the periods between the arrivals of customers are not the renewal times of some renewal processes, since their distribution varies depending on the full state of the QS.
Also, a sequence of customer service periods does not constitute a renewal process.

However, ergodicity was also established for such QS's and bounds were obtained for the rate of convergence of the distribution of the state of the system to its limiting value - naturally, under certain restrictions on the intensities. 
Probably, the first case of the analysis of such a system was given in \cite{Veretennikov13QS}. 
Now, the analysis of such systems is given in \cite{Veretennikov14QS, Veretennikov16RT, Veretennikov17QS, Veretennikov19RT, VeretennikovZverkina14, VeretennikovZverkina16, ZverkinaFPM20} et al.


Further, in some models described by piecewise linear processes, the studied processes are not regenerative, i.e. the duration of the periods between the hits to a certain fixed state are not i.i.d. r.v.'s. 



In addition, they can be ``weak'' dependent because their distributions depend on the behaviour of some other parameters of the studying system, -- and these parameters depend on all other distributions. 

Moreover, in many situations, the switch between different states of some element of the reliability model (or in queuing or network systems) can have some duration.
Thus, we also need to consider the delayed renewal process.

This is a reason for the attempt presented here to analyze the behaviour of such non-standard near-renewal processes.

\subsection{Quasi-renewal processes}
Consider the counting process $\widetilde N_t\bd \dd\suml _{i=1} ^\infty \1\left \{ \suml _{s=1} ^i\widetilde \xi _k\leqslant t\right \} $, where the random variables $\widetilde \xi_i$ can be dependent ant they can have different distributions.
In the case of arbitrary distributions of random variables $\widetilde \xi_i$ and the case of an arbitrary dependence between them, the study of the behaviour of the process  $\widetilde N_t$ is naturally impossible. 

\begin{df} [Quasi-regenerative process]\label{QR}
The process $\widetilde N_t\bd \dd\suml _{i=1} ^\infty \1\left \{ \suml _{s=1} ^i\widetilde \xi _k\leqslant t\right \} $ is named quasi-renewal process if the r.v.'s $\widetilde \xi_i$ are defined by their generalized intensity $\lambda_i(s)$ (see Definition \ref{INT}) which satisfy the following conditions
\\
1. The (generalized) measurable non-negative functions $ \varphi(s)$ and $Q(s)$ exist such that for all $s\geqslant 0$, ${ \varphi(s)\leqslant\lambda_j(s) \leqslant Q(s)}$;
\\
2. $\dd\intl_0^\infty  \varphi(s) \ud s = \infty$, and $\dd\intl_0^\infty x^{k-1}  \exp\left(-\intl_0^x  \varphi(s)\ud s\right)\ud x=M_k<\infty $ for some $k\geqslant 2$;
\\
3. $Q(s)$ is bounded in some neighbourhood of zero;
\\
4. There exists the constant $T\geqslant 0$ such that $ \varphi(s)>0$ a.s. for all $s>T$.
\TR
\end{df}
\begin{rem}
If $T>0$, the process $\widetilde N_t$ is the delayed process.
The time $T$ bounds the delay of the quasi-renewal times.
\TR
\end{rem}

\begin{rem}\label{r0}
The conditions (2) and (3) ensure that:
 $$\mathbb{E} \,\xi_i>0,\qquad \mathbb{E} \,(\xi_i)^2<\frac{M_2}{2}<\infty \qquad \mbox{Var}\,\xi_i^2>0.$$
\TR
\end{rem}
\begin{rem}
 Condition 4 is needed for the use of the coupling method. 
 \TR
\end{rem}
Note, that the one-dimensional Markov process cannot be quasi-regenerative; in the Examples \ref{ExRT}, \ref{ExQS} discussed above, quasi-regenerative processes are one of the components of the PLMP.

The use of quasi-renewal processes is based on the 
 \begin{thm}[Generalized Lorden's inequality (Kalimulina, GZ, 2019 -- \cite{KalimulinaZverkinaICSM2020})]\label{GLEthm}
 If conditions 1--3 are satisfied, then for the backward renewal time $B_t$ and forward renewal time $W_t$ of the quasi-renewal process the following inequality is true for $\ell\leqslant  k-1$
 \begin{equation}\label{GLE}
   \EE\,(B_t)^\ell\leqslant \EE\,\eta^\ell+\frac{\EE\,\eta^{\ell+1}}{(\ell+1)\EE\,\zeta}\bd \Xi_\ell,   
 \end{equation}
where 
$$\dd \PP\{\eta\leqslant  x\}= 1- \exp\left(\dd-\intl_0^x  \varphi(t)\ud t\right); \qquad
\PP\{\zeta\leqslant  x\}= 1- \exp\left(\dd-\intl_0^x  Q(t)\ud t\right).$$     
\TR
\end{thm}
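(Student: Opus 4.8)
The plan is to bound $\EE\,(B_t)^\ell$ by partitioning according to the number $\widetilde N_t$ of quasi-renewals that have occurred in $[0,t]$, so as to isolate the single, still unfinished renewal period that carries the age $B_t$, and then to control the two resulting pieces by the genuine renewal processes generated by the extremal intensities $\varphi$ and $Q$. Write $S_j\bd\suml_{i=1}^j\widetilde\xi_i$, $\overline G(u)\bd\exp\left(-\intl_0^u\varphi(s)\ud s\right)=\PP\{\eta>u\}$ and $\overline H(u)\bd\exp\left(-\intl_0^u Q(s)\ud s\right)=\PP\{\zeta>u\}$. By conditions (2)--(3) (cf. Remark \ref{r0}) we have $0<\EE\,\zeta$ and $\EE\,\eta^{\ell+1}<\infty$ for $\ell\leqslant k-1$, so every quantity below is finite. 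Condition 1 will be used in its conditional form: since the generalized intensity of $\widetilde\xi_j$ given the past lies between $\varphi$ and $Q$, one has, for every $j$ and $u\geqslant0$, $\overline H(u)\leqslant\PP\{\widetilde\xi_j>u\mid\FFF_{S_{j-1}}\}\leqslant\overline G(u)$.

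First I would split on $\widetilde N_t$, using that pathwise $(B_t)^\ell=(t-S_{\widetilde N_t})^\ell$:
\[
\EE\,(B_t)^\ell=t^\ell\,\PP\{\widetilde\xi_1>t\}+\suml_{j\geqslant1}\EE\big[(t-S_j)^\ell\,\1\{S_j\leqslant t<S_{j+1}\}\big].
\]
The first summand is the contribution of the initial, still running period; bounding its survival by $\overline G$ and using the elementary estimate $u^\ell\,\overline G(u)=u^\ell\PP\{\eta>u\}\leqslant\EE\big[\eta^\ell\1\{\eta>u\}\big]\leqslant\EE\,\eta^\ell$ (valid for every $u$, in particular $u=t$) shows it is at most $\EE\,\eta^\ell$, which is exactly the additive term of $\Xi_\ell$. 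Treating this period apart is essential: it is the only one whose length is not yet exhausted at time $t$, and it is responsible for the gap between the time-average (stationary) value and the uniform-in-$t$ bound.

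For the remaining sum I would condition on $\FFF_{S_j}$, rewrite $\1\{S_{j+1}>t\}=\1\{\widetilde\xi_{j+1}>t-S_j\}$ and replace its conditional probability by the upper survival bound $\overline G(t-S_j)$, arriving at
\[
\suml_{j\geqslant1}\EE\big[(t-S_j)^\ell\,\1\{S_j\leqslant t<S_{j+1}\}\big]\leqslant\intl_{(0,t]}z(t-s)\,\ud U(s),\qquad z(u)\bd u^\ell\,\overline G(u),
\]
where $U(s)\bd\suml_{j\geqslant1}\PP\{S_j\leqslant s\}$ is the quasi-renewal measure. Here the lower intensity $\varphi$ has fixed the reward $z$, for which the layer-cake identity gives $\intl_0^\infty z(u)\,\ud u=\dfrac{\EE\,\eta^{\ell+1}}{\ell+1}$. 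The upper intensity $Q$ enters through the renewal density: because each period is conditionally stochastically at least $\zeta$, the quasi-renewal point process is no denser than that of the genuine $\zeta$-renewal process, whose long-run rate is $1/\EE\,\zeta$. The target is therefore $\intl_{(0,t]}z(t-s)\,\ud U(s)\leqslant\dfrac{1}{\EE\,\zeta}\intl_0^\infty z(u)\,\ud u=\dfrac{\EE\,\eta^{\ell+1}}{(\ell+1)\EE\,\zeta}$, the second term of $\Xi_\ell$; adding the two pieces yields the assertion. The forward time $W_t$ is handled identically, the only change being the reward $z_W(u)\bd\EE\big[(\eta-u)^\ell\1\{\eta>u\}\big]$, whose integral over $[0,\infty)$ is again $\EE\,\eta^{\ell+1}/(\ell+1)$.

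The hard part is the last displayed inequality, i.e. the sharp, uniform-in-$t$ renewal-reward estimate $\intl_{(0,t]}z(t-s)\,\ud U(s)\leqslant\frac{1}{\EE\,\zeta}\intl_0^\infty z$. Its $t\to\infty$ limit is the key-renewal-theorem value $\frac1{\EE\,\zeta}\int z$, but for finite $t$ a naive comparison fails on two counts: the reward $z(u)=u^\ell\overline G(u)$ is unimodal rather than monotone, so stochastic domination of the counting processes does not transfer termwise; and periods that cluster can make a finite-$t$ renewal sum overshoot its stationary value. What rescues the bound is precisely that $z(0)=0$ (for $\ell\geqslant1$), so the dangerous boundary term at $j=\widetilde N_t$, where the argument $t-S_j=B_t$ is small, is suppressed, while the overshoot that survives is absorbed by the $\EE\,\eta^\ell$ already peeled off in the initial period. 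Making this rigorous is where I would invoke Lorden's classical inequality (Theorem \ref{LI}) applied to the $\zeta$-process together with an explicit estimate of its renewal function, and where the conditional domination of the counting process (built by a monotone coupling from common uniform variables, as in Section \ref{constr}, or via Lemma \ref{GBCL}) must be justified for the possibly dependent periods $\widetilde\xi_j$.
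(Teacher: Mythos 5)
The paper itself contains no proof of Theorem~\ref{GLEthm}: it is imported by citation from \cite{KalimulinaZverkinaICSM2020,KalimulinaZverkina19}, so your attempt can only be judged on its own merits. Your decomposition over $\widetilde N_t$, the bound $t^\ell\overline G(t)\leqslant \EE\bigl[\eta^\ell\1\{\eta>t\}\bigr]\leqslant\EE\,\eta^\ell$ for the running first period, and the layer-cake identity $\intl_0^\infty u^\ell\overline G(u)\ud u=\EE\,\eta^{\ell+1}/(\ell+1)$ are all correct. The genuine gap is the step you yourself flag as ``the hard part'': the claimed uniform-in-$t$ renewal-reward estimate $\intl_{(0,t]}z(t-s)\ud U(s)\leqslant\frac{1}{\EE\,\zeta}\intl_0^\infty z(u)\ud u$ is not merely unproven in your write-up --- it is \emph{false}, already in the classical i.i.d.\ case where quasi-renewal reduces to renewal. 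Take $\lambda_j(s)\equiv\lambda(s)=\frac{1}{1-s}$ on $[0,1)$, i.e.\ i.i.d.\ periods uniform on $[0,1]$; then $\varphi=Q=\lambda$, $\eta\bD\zeta\bD\xi$, conditions 1--3 hold, and $\overline G$ is the exact conditional survival, so your replacement step is an identity rather than an inequality. With $\ell=1$ and $t=\tfrac32$ the first term vanishes ($\overline G(3/2)=0$) and, using the renewal density $u(s)=e^{s}$ on $[0,1]$, $u(s)=e^{s}-se^{s-1}$ on $[1,2]$, one computes exactly
$$
\intl_{(0,3/2]}z\left(\tfrac32-s\right)\ud U(s)=\EE\,B_{3/2}=4-\sqrt{e}\left(e-\tfrac12\right)\approx 0.3427\;>\;\frac13=\frac{\EE\,\eta^{2}}{2\,\EE\,\zeta}.
$$
The expected age oscillates \emph{above} its key-renewal-theorem limit $\frac{1}{\EE\,\zeta}\intl_0^\infty z$ at finite $t$ (equivalently, the local renewal density exceeds $1/\EE\,\zeta$), so no amount of stochastic domination, coupling, or appeal to classical Lorden for the $\zeta$-process can rescue the inequality as you stated it.

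The theorem survives only because $\Xi_\ell$ carries the additive $\EE\,\eta^\ell$, and your accounting spends that entire term on the $j=0$ summand, leaving no slack to absorb the finite-$t$ overshoot of the renewal integral --- exactly the overshoot your counterexample exhibits (note that in it the $j=0$ term is $0$ while the overshoot is positive). A correct proof must establish the \emph{combined} bound $t^\ell\overline G(t)+\intl_{(0,t]}z(t-s)\ud U(s)\leqslant\EE\,\eta^\ell+\frac{1}{\EE\,\zeta}\intl_0^\infty z(u)\ud u$, with $\EE\,\eta^\ell$ available to pay for the excess of the convolution over its stationary value (observe $t^\ell\overline G(t)\leqslant\EE\bigl[\eta^\ell\1\{\eta>t\}\bigr]$ decays in $t$, so charging all of $\EE\,\eta^\ell$ to the first period is wasteful precisely when the overshoot is largest); this joint estimate, together with a careful treatment of the conditional intensity bounds for the dependent $\widetilde\xi_j$ (which you correctly identify as needing a conditional, filtration-wise formulation absent from Definition~\ref{QR}), is the actual content of the cited proof and cannot be split into the two separate inequalities your plan relies on.
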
 
\begin{rem}
The same inequality is true for forward renewal time (under the same conditions):
 \begin{equation}\label{GLE1}
   \EE\,(W_t)^\ell\leqslant \EE\,\eta^\ell+\frac{\EE\,\eta^{\ell+1}}{(\ell+1)\EE\,\zeta}\bd \Xi_\ell,   
 \end{equation}
 \TR
\end{rem}
The quasi-regenerative process in reliability theory and queueing theory can be described by the multidimensional (or variable-dimensional) composition of quasi-renewal processes or alternating quasi-renewal processes.

For example, in \cite{Zverkina20smarty} the case when the intensity of input flow and service times of the queuing system satisfy the conditions 1-4 of the Definition \ref{QR}, and the input flow is only bounded.

Moreover, the application of Generalized Lorden's inequality and coupling method gave the bounds of convergence for the generalized Markov modulated process, i.e. for a process consisting of several delay quasi-renewal processes, the intensities of which depend on the backward renewal time of all components -- see \cite{ZverkinaDCCN20}.

In \cite{Veretennikov19RT} the reliability model from Example \ref{ExRT} was studied -- but in the condition that failure and repair intensities are bounded from zero by some positive function.

Since Section \ref{constr} suggests using Lorden's inequality \ref{LI} to construct successful coupling for a ``classic'' renewal processes, one might as well use Lorden's Generalized Inequality \ref{GLEthm} to construct successful quasi-recovery coupling of processes.

{\it The use of Generalized Lorden's Inequality gives the possibility to study non-regenerative processes and use the coupling method for calculating upper bounds for convergence rates in the case when the intensities are not separated from zero, and the switching between operating modes may be delayed.}

\section{Quasi-regenerative processes}
\begin{df}[Quasi-regenerative process]
 {\it The process $(X_t,\,t\geqslant 0)$ on a probability space $(\Omega,\mathscr{F} , \PP )$, with a measurable state space $(\mathscr{X}  , \mathscr{B} (\mathscr{X} ))$ is quasi-regenerative,  if on other probability space $(\widetilde \Omega,\widetilde{ \mathscr{F}} , \widetilde \PP )$ there exists regenerative Markov process $\widetilde X_t$ such that for all $t\geqslant  0$, the distribution $\PPP_t$ of $X_t$ is equal to the distribution $\widetilde\PPP_t$ of the process $\widetilde X_t$.
}
\end{df}   
\begin{rem}
This definition is similar to the previously given Definition \ref{SC} of "successful coupling".

Therefore, the construction of a ``regenerative copy'' for quasi-regenerative processes is based on the coupling method.  \TR
\end{rem}

To illustrate the concept of a quasi-regenerative process, we will show that under certain conditions for intensities $\lambda_i(X_t)$, $\mu_i(X_t)$, process $X_t$, which describes the reliability model presented in Example \ref{ExRT}, is quasi-regenerative.
\begin{thm}
Suppose, that
\\
{\bf a.} There exists (generalized) measurable non-negative functions $\varphi^{(j)}_i(t)$ and $\Phi^{(j)}_i(t)$  defined on the interval $(0;\infty)$, and the constants  $T_i^{(j)}$ such that for all $t>T_i^{(j)}$ the functions $\varphi^{(j)}_i(t)$are a.s. positive  $(i,j\in\{1;2\})$;
\\
{\bf b.} $\dd\intl_0^\infty \varphi^{(j)}_i(s)\ud s=\infty$, and  $\dd\intl_0^\infty x^{k-1}  \exp\left(-\intl_0^x  \varphi^{(j)}_i(s)\ud s\right)\ud x<\infty $ for some $k\geqslant 2$;
\\
{\bf c.}  $\Phi^{(j)}_i(t)$ are bounded in some neighbourhood of zero;
\\
{\bf d.} For all $t\in[0;\infty)$ the inequalities $\varphi^{(1)}_i(t)\leqslant \lambda_i(t)\leqslant \Phi^{(1)}_i(t)$ and $\varphi^{(2)}_i(t)\leqslant \mu_i(t)\leqslant \Phi^{(2)}_i(t)$ are satisfied $(i\in\{1;2\})$.

If the conditions {\bf a}, {\bf b}, {\bf c} and {\bf d} are satisfied, then the process $X_t$ defined in the Example \ref{ExRT} is quasi-regenerative.
\TR
\end{thm}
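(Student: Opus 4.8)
The plan is to build, on an enlarged probability space, a process $\widetilde X_t$ that obeys the same piecewise-linear Markov dynamics as the process $X_t$ of Example \ref{ExRT} but carries genuine regeneration epochs, in the spirit of the successful-coupling construction of Section \ref{constr}. First I would record that conditions \textbf{a}--\textbf{d} are exactly what is needed to view each of the four elementary switching mechanisms of the model (failure and repair of each element) as a quasi-renewal process in the sense of Definition \ref{QR}: the sandwich $\varphi^{(1)}_i\leqslant\lambda_i\leqslant\Phi^{(1)}_i$ and $\varphi^{(2)}_i\leqslant\mu_i\leqslant\Phi^{(2)}_i$ of condition \textbf{d} plays the role of the bounds $\varphi\leqslant\lambda_j\leqslant Q$, while \textbf{a}, \textbf{b}, \textbf{c} reproduce the positivity, integrability and boundedness requirements. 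Consequently the Generalized Lorden's inequality (Theorem \ref{GLEthm}) applies to every component and yields the uniform moment bounds $\EE\,(B_t)^\ell\leqslant\Xi_\ell$ (and likewise for $W_t$) for $\ell\leqslant k-1$ on the backward and forward times of each switching process.

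The heart of the argument is the construction of the regenerative copy. I would fix a canonical configuration (for instance both elements freshly switched, so that all elapsed-time coordinates vanish) and introduce a sequence of attempt epochs $\sigma_n$ -- natural candidates are the successive instants at which one distinguished component switches. At each $\sigma_n$ the current state is known, and I would apply the generalized Basic Coupling Lemma \ref{GBCL} to the transition laws of the remaining coordinates, splitting off the common part $\varkappa$ of those laws relative to the canonical restart law; with probability at least $\varkappa$ the process is steered, \emph{within the common part only}, to the canonical configuration. Because the lemma guarantees $\vartheta_i\bD\theta_i$, redirecting solely the common part leaves every one-dimensional transition law, and hence the whole marginal flow $\widetilde\PPP_t$, identical to $\PPP_t$; the added randomization alters only the joint path structure. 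This is precisely a state-dependent, continuous-time analogue of Nummelin splitting, with the minorizing mass supplied by the coupling lemma.

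It then remains to verify the two defining properties. Equality of marginals, $\widetilde\PPP_t=\PPP_t$ for every $t\geqslant0$, holds by the common-part construction just described. For the regenerative structure, the successful attempts partition the time axis into cycles whose post-regeneration evolution, restarted afresh from the canonical configuration, is independent of the past and identically distributed, so that $\widetilde X_t$ is a (possibly delayed, since the $T^{(j)}_i$ may be positive) regenerative Markov process. Finiteness of the expected cycle length follows exactly as in Section \ref{constr}: the Generalized Lorden bound confines the backward times at the epochs $\sigma_n$ to a compact set $[0,\Theta]$ with probability at least $p_\Theta=1-\Xi_1/\Theta$, on which the common part $\varkappa_\Theta=\inf_{a\in(0,\Theta)}\varkappa(a)$ is strictly positive; hence the number of attempts until success is stochastically dominated by a geometric variable, and the cycle length is a geometric sum of switching periods with finite mean, indeed with finite $\ell$-th moment for $\ell\leqslant k-1$ by Jensen's inequality and $\EE(\xi\mid A)\PP(A)\leqslant\EE\xi$.

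The step I expect to be the main obstacle is forcing \emph{both} elements to the canonical configuration simultaneously. Because the intensities $\lambda_i,\mu_i$ depend on the full state $X_t$ and the two elements do not reset together, coupling a single component's switching time does not produce a full-state regeneration; one must couple the joint transition of all still-uncoupled coordinates at once, which is exactly why the multivariate form \ref{GBCL}, rather than the two-variable Lemma \ref{BCL}, is required, and why the attempt epochs and the canonical configuration must be chosen so that a single common-part event resets the entire vector $(\vec\nu,\vec x)$. Controlling the common part uniformly over the state-dependent intensities, using only the a priori bounds $\varphi^{(j)}_i,\Phi^{(j)}_i$, is the delicate quantitative point; once $\varkappa_\Theta$ is bounded below on the Lorden set $[0,\Theta]$, the geometric-sum estimate closes the proof.
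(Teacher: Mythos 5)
Your proposal follows essentially the same route as the paper's own proof: both construct, on an enlarged probability space, a marginal-preserving copy of $X_t$, use the Generalized Lorden inequality (Theorem \ref{GLEthm}) together with Markov's inequality to confine the second element's elapsed/residual time to $[0,\Theta]$ with probability at least $1-\Xi/\Theta$ at the switching epochs of the first element, and then apply the coupling lemma so that with probability at least $\varkappa(\Theta)>0$ both elements' current periods end simultaneously, whence the return to the regeneration state $((0,0);(0,0))$ happens after a geometrically bounded number of attempts, giving finite (indeed $\ell$-th moment, $\ell\leqslant k-1$) cycle lengths. The only cosmetic difference is that for this two-element model the paper gets by with the two-variable Lemma \ref{BCL}, since at each attempt only two residual durations must be matched, whereas you invoke the multivariate Lemma \ref{GBCL} (which would indeed be needed for a system with more than two components).
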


\begin{proof}
Denote the d.f. of $j$-th work periods of $i$-th element $\xi_i^{(j)}$ by $F_i^{(j)}(t)$, and denote the d.f. of $j$-th repair periods of $i$-th element $\eta_i^{(j)}$ by $G_i^{(j)}(t)$; $(i\in\{1;2\}, j\in\mathbb Z_{\geqslant 0})$.

Easy to see that $m_i^{(\ell)}\leqslant \EE\,\left(\xi_i^{(j)}\right)^\ell\leqslant M_i^{(\ell)}$ for all $\ell\in[0;k]$, where 
$$
m_i^{(\ell)}=\dd\intl_0^\infty x^{\ell-1}  \exp\left(-\intl_0^x  \Phi^{(j)}_i(s)\ud s\right)\ud x,\mbox{ and} M_i^{(\ell)}=\dd\intl_0^\infty x^{\ell-1}  \exp\left(-\intl_0^x  \varphi^{(j)}_i(s)\ud s\right)\ud x.
$$

To simplify the proof and shorten the text of the paper, we will assume here that $T_i^{(j)}=T$, $\varphi^{(j)}_i(t)=\varphi(t)$, and  $\Phi^{(j)}_i(t)=\Phi(t)$. 
Accordingly, denote $m_i^{(\ell)}=m_\ell$ and $M^{(\ell)}_i=M_\ell$.

Consider the process $X_t$ which starts from the state $X_0=(0,0);(0,0)$, i.e. at the time $t$ both elements start working period.

At the time of the first failure of the first element $t_t$ (see Fig. \ref{ExRT}), the second element can be in the working state or in the repair state.
Its elapsed time in this state can be bounded by the inequality (\ref{GLE}).

The time $t_1$ is the Markov time of the process $X_t$, and this time we will start changing process $X_t$ in such a way that {\it with some non-zero probability}, the beginning of the next period of work of the first element coincides with the beginning of the work of the second element, i.e. to get the process $X_t$ back to state $(0,0);(0,0)$.

Take any number $\Theta>\Xi\bd M_1+\dd\frac{M_2}{2m_1}$. 
Consider two scenarios.

1. At the time $t_1$, the second element is in the repair period, and elapsed time of this period is $B_{t_1}^{(2)}$.
By inequality (\ref{GLE}) we have $\EE\,B_{t_1}^{(2)}<\Xi$, and by Markov inequality we have $\PP\{B_{t_1}^{(2)}>\Theta\}\leqslant \dd\frac{\EE\,B_{t_1}^{(2)}}{\Theta}=\frac{\Xi}{\Theta}$, so, $\PP\{B_{t_1}^{(2)}<\Theta\}\geqslant q_1\bd 1-\dd \frac{\Xi}{\Theta} $.

If event $\{B_{t_1}^{(2)}<\Theta\}$ happened, then at the time $t_1$ both elements are in repair condition: the first one starts the repair period, and the second element continues the repair period, having elapsed repair time less than $\Theta$.
Thus, on some probability space, we can create the remaining time $W_{t_2}^{(2)}$ of the repair of the second element and the total time $\eta_1$ of repair of the first element in such a way that they end at the same time with a probability greater than $\varkappa(\Theta)\bd\inf_{a\in[0;\Theta]}\dd \intl_0^\infty \min\{\varphi(s),\varphi(s+a)\}\ud s$, because the intensities of repair are bounded from below by the function $\varphi(s)$.

2. At the time $t_1$, the second element is in the working period, elapsed time of this period is $B_{t_1}^{(2)}$ and the remaining time of this period is $W_{t_1}^{(2)}$.
By inequality (\ref{GLE1}) we have $\EE\,W_{t_1}^{(2)}<\Xi$, and by Markov inequality we have $\PP\{W_{t_1}^{(2)}>\Theta\}\leqslant \dd\frac{\EE\,W_{t_1}^{(2)}}{\Theta}=\frac{\Xi}{\Theta}$, so, $\PP\{W_{t_1}^{(2)}<\Theta\}\geqslant q_1\bd 1-\dd \frac{\Xi}{\Theta} $.
At the time $t_1$, the first element begins to be repaired, and the repair time $\eta_1$ has a d.f. $F(s)$; denote $q_2\bd \PP\{\eta_1\leqslant\Theta\}=F(\Theta)$.

If events $W_{t_1}^{(2)}>\Theta$ and $\eta_1<\Theta$ have occurred, then at time $t_2\bd t_1+\eta_1$ both elements are in working condition: the first one starts the working period, and the second element continues the working period, having elapsed working time less than $\Theta$.

Thus, on some probability space, we can create the remaining time $W_{t_2}^{(2)}$ of the work of the second element and the total time $\xi_2$ of work of the first element in such a way that they end at the same time with a probability greater than $\varkappa(\Theta)\bd\inf_{a\in[0;\Theta]}\dd \intl_0^\infty \min\{\varphi(s),\varphi(s+a)\}\ud s$>0.

Because $\varphi(s)>0$ a.s. for $s>T$, $\varkappa(\Theta)>0$.

3. If no coupling has occurred, then the described procedure can be repeated at the end of the next working periods of the first element.
Obviously, with probability 1, the constructed process will return to state $((0,0);(0,0))$.

Thereby, we can construct such a continuation of a process $X_t$ that at the time when the repair of the first element is completed, the repair of the second element will be completed, i.e. process $X_t$ will end up in state $((0,0);(0,0))$ from which it started -- with probability greater than $\varkappa(\Theta) q_1q_2$.
It can be seen from the construction procedure that the process remains Markov, and its marginal distributions coincide with the marginal distributions of the original process.

Hence, process $_t$ is quasi-regenerative.
\end{proof}

The construction of ``regenerative copy'' for reliability non-regenerative process can be used for the proof of ergodicity of this process (see, e.g., \cite{ZverkinaCN19}), and then the coupling method applied to the ``regenerative copy'' of the reliability process was used for finding an upper bound of the convergence rate of the distribution of the original reliability process.

The natural generalization of such a problem is a generalization of the notion of the Markov Modulated Poisson Process, which is described in \cite{ZverkinaDCCN20}. 
A ``regenerative copy'' was created for this non-regenerative process, the ergodicity was proved, and then an upper bound for the convergence rate was found. 

\section*{Acknowledgments}
The author is grateful to E.~Yu.~Kalimulina for her valuable help in preparing the text.


\end{document}